\newcommand{\Var}{\operatorname{Var}}
\newcommand{\Cov}{\operatorname{Cov}}
\newcommand{\E}{\mathbb{E}}
\renewcommand{\P}{\mathbb{P}}
\newcommand{\balpha}{\boldsymbol{\alpha}}
\newcommand{\bbeta}{\boldsymbol{\beta}}
\newcommand{\bw}{\operatorname{w}}
\newcommand{\bx}{\operatorname{x}}
\newcommand{\bv}{{\boldsymbol{v}}}
\newcommand{\bu}{{\boldsymbol{u}}}
\newcommand{\GIRG}{\operatorname{GIRG}}
\newcommand{\Mab}{M_{\varepsilon}^{(\balpha,\bbeta)}}
\newcommand{\M}{M_{\varepsilon}^{(\balpha^*,\bbeta^*)}}
\newcommand{\K}{\mathcal{K}_k}
\DeclareMathOperator*{\argmax}{arg\!\max}
\newcommand{\change}{}%{\color{NavyBlue}}
\newcommand{\newchange}{}%{\color{NavyBlue}}
\begin{document}

\theoremstyle{plain}
    \newtheorem{thm}{Theorem}
    \newtheorem{lemma}[thm]{Lemma}
    \newtheorem{cor}[thm]{Corollary}
	\newtheorem{prop}[thm]{Proposition}
	
\theoremstyle{definition}
    \newtheorem{defn}{Definition}
    \newtheorem*{propr}{Property}
	
\theoremstyle{remark}
    \newtheorem{oss}[thm]{Remark} 
    \newtheorem*{ex}{Example}

\title{Cliques in geometric inhomogeneous random graphs}
\author[1]{Riccardo Michielan}
\author[1]{Clara Stegehuis}
\affil[1]{University of Twente - Faculty of Electrical Engineering, Mathematics and Computer Science}
\maketitle

\begin{abstract}
     {Many real-world networks were found to be highly clustered, and contain a large amount of small cliques. We here investigate the number of cliques of any size $k$ contained in a geometric inhomogeneous random graph: a scale-free network model containing geometry. The interplay between scale-freeness and geometry ensures that connections are likely to form between either high-degree vertices, or between close by vertices. At the same time it is rare for a vertex to have a high degree, and most vertices are not close to one another. This trade-off makes cliques more likely to appear between specific vertices. In this paper, we formalize this trade-off and prove that there exists a typical type of clique in terms of the degrees and the positions of the vertices that span the clique. Moreover, we show that the asymptotic number of cliques as well as the typical clique type undergoes a phase transition, in which only $k$ and the degree-exponent $\tau$ are involved. Interestingly, this phase transition shows that for small values of $\tau$, the underlying geometry of the model is irrelevant: the number of cliques scales the same as in a non-geometric network model.}
     {random graphs, network clustering, cliques, phase-transition, geometric networks}
\end{abstract}

\section{Introduction}

Real-world networks often share common characteristics. For example, many large real-world networks are \textit{scale-free}: a small number of individuals have a large amount of connections, whereas most of the individuals only have a small connectivity. This feature is mathematically often described by assuming that the degrees of the vertices in the network follow (at least asymptotically) a power-law distribution. Another common feature is that real-world networks typically have a large clustering coefficient. That is, two neighbors of the same vertex are likely to be connected, and thus the network contains many triangles. This structural property is highly related to a possible underlying geometry of the network. Indeed, when any two close individuals are more likely to connect, the triangle inequality ensures that triangles are more likely to form between close groups of three vertices. In fact, in several examples of geometric network models the presence of this underlying network geometry guarantees a high clustering coefficient, for example in Hyperbolic random graphs \cite{HRGclustering}, and geometric inhomogeneous random graphs \cite{GIRG}.

While clustering is typically measured in terms of the number of triangles in the network, the presence of larger cliques inside a network is also informative on the amount of network clustering. Indeed, in clustered networks, one would expect the number of cliques of size larger than three to be high as well. Therefore, the number of cliques of general sizes have been extensively studied in several types of random graph models without underlying geometry, such as dense inhomogeneous random graphs \cite{cliques-dirg}, scale free inhomogeneous random graphs \cite{cliques-sfirg, cliques-rsfne}, general rank-1 inhomogeneous random graphs \cite{cliques-r1irg}. The number of cliques in random graphs with underlying geometry are less well-studied, as the presence of geometry creates correlations between the presence of different edges, making it difficult to compute the probability that a given clique is present. Still, some results are known for high-dimensional geometric random graphs \cite{cliques-hdgrh} and Hyperbolic random graphs \cite{cliques-hrg}, showing that these types of random graphs typically contain a larger number of cliques than non-geometric models as long as the dimension of the underlying space is not too large. Particular attention has been given to the clique number: the largest clique in the network~\cite{cliques-hrg,janson2010large,devroye2011}.

In this paper, we study general clique structures that can indicate network clustering inside a more general geometric network model, and investigate the relation between the presence of geometry and the presence of cliques. In particular, we analyze the Geometric Inhomogeneous Random Graph (GIRG)~\cite{GIRG}, a random graph model that includes scale-free vertex weights, which represent a soft constraint for the vertex degrees, and an underlying geometric space that makes nearby vertices more likely to be connected. We analyze the number of $k$-cliques contained in this random graph model, by deriving and solving an optimization method, similarly to~\cite{ECMsubgraphs}. This optimization method allows to overcome the difficulties posed by the dependence of the presence of edges in geometric models by studying the connections between different regions separately. We show that $k$-cliques typically appear in specified regions of the network, and we describe the specific properties satisfied by these typical cliques in terms of the vertex degrees and their geometric positions. Interestingly, our results show that geometry plays a central role in the number of cliques of any size only when the degree-exponent of the power-law is at least $7/3$. For smaller degree-exponents however, we show that the typical type of clique does not depend on the underlying geometry, and that the number of cliques of all sizes scales the same as in scale-free configuration models \cite{ECMsubgraphs}, random graph models without any form of geometry. In other words, our results show that geometry does not always imply larger number of cliques or clustering coefficients in the scale-free regime, even when the dimension of the geometric space is low.

{\change
Specifically, we find that for each clique size $k$, there exists a threshold degree-exponent $\tau_k$ such that for scale-free networks with degree-exponent below $\tau_k$, geometry does not influence the clique counts. On the other hand, when the degree exponent is above $\tau_k$, the number of cliques of size $k$ is influenced by the underlying geometry of the model. Furthermore, such threshold $\tau_k$ increases with $k$, so in larger cliques the geometry of the model becomes irrelevant for a larger range of the degree-exponent. 
}

\paragraph{Notation.} We describe the notation that will be used throughout this paper.
A $k$-clique is a subset of vertices of size $k$ such that they are all pairwise connected, and it is denoted by $\mathcal{K}_k$.
In this paper, we analyze cliques contained in GIRGs where the number of vertices $n$ tends to infinity. We say that a sequence of events $(\mathcal{E}_n)_{n \geq 1}$ happens with high probability (w.h.p.) if $\lim_{n \to \infty} \P(\mathcal{E}_n)=1$. We say that a sequence of random variables $X_n$ converges in probability to the random variable $X$ if $\lim_{n\to \infty} \P(|X_n - X|>\varepsilon) = 0$, and denote this by $X_n \stackrel{p}{\to} X$. Finally, we write 
\begin{itemize}
    \item $f(n) = o(g(n))$ if $\lim_{n \to \infty} f(n)/g(n) = 0$,
    \item $f(n) = O(g(n))$ if $\limsup_{n \to \infty} |f(n)|/g(n) < \infty$,
    \item $f(n) = \Omega(g(n))$ if $\liminf_{n \to \infty} f(n)/g(n) >0$,
    \item $f(n) = \Theta(g(n))$ if $f(n) = O(g(n))$ and $f(n) = \Omega(g(n))$,
\end{itemize}
and their probabilistic versions
\begin{itemize}
    \item $X_n = o_{\P}(a(n))$ if $\lim_{n \to \infty} \P\left(\left|\frac{X_n}{a(n)}\right| \geq \varepsilon\right) = 0$ for all $\varepsilon > 0$,
    \item $X_n = O_{\P}(a(n))$ if for any $\varepsilon > 0$ there exist constants $M > 0$ and $N \in \mathbb{N}$ such that $$\P(|X_n|/a(n) \geq M) < \varepsilon, \quad \forall n > N,$$
    \item $X_n = \Omega_{\P}(a(n))$ if for any $\varepsilon > 0$ there exist constants $m > 0$ and $N \in \mathbb{N}$ such that $$\P(X_n/a(n) \leq m) < \varepsilon, \quad \forall n > N,$$
    \item $X_n = \Theta_{\P}(a(n))$ if for any $\varepsilon > 0$ there exist constants $m,M > 0$ and $N \in \mathbb{N}$ such that $$\P(|X_n|/a(n) \not \in [m,M]) < \varepsilon, \quad \forall n > N.$$
\end{itemize}

\paragraph{Geometric Inhomogeneous Random Graph (GIRG).}
We now define the model. Let $n \in \mathbb{N}$ denote the number of vertices in the graph. In the GIRG, each vertex $i$ is associated with a weight, $\bw_i$ and a position $\bx_i$.
{\change The weights $\bw_1,...,\bw_n$ are independent copies of a random variable $\bw$, which is Pareto (power-law) distributed, with exponent $\tau$. That is, for any $i \in V$
\begin{equation}\label{eq:pl}
    \overline{F}_{\bw}(w) = \mathbb{P}(\bw > w) = 
    \left(\frac{w_0}{w}\right)^{\tau - 1}, 
\end{equation} 
or equivalently
\begin{equation}
    f_{\bw}(w) = \frac{(\tau-1)w_0^{\tau-1}}{w^{\tau}}, 
\end{equation} 
for all $w\geq w_0 > 0$. We impose the condition $\tau \in (2,3)$, to ensure that the weights have finite mean $\mu := \E[\bw] = \frac{\tau-1}{\tau-2}w_0$, but infinite variance.}

As ground space for the positions of the vertices, we consider the $d$-dimensional torus $\mathbb{T}^d = \mathbb{R}^d/\mathbb{Z}^d$. Then, the positions $\bx_1,...,\bx_n$ are independent copies of an uniform random variable $\bx$ on $\mathbb{T}^d$. That is,
\begin{equation}\label{eq:poseq}
    \mathbb{P}(\bx \in [a_1,b_1]\times \cdots \times [a_d,b_d]) = \prod_{j = 1}^d(b_j-a_j),
\end{equation}
for any $(a_1,...,a_d),(b_1,...,b_d)$ in $[0,1]^d$ such that $a_j \leq b_j$ for every $j = 1,...,d$.

An edge between any two vertices $u,v \in V$ of the GIRG appears with a probability $p_{uv}$ determined by the weights and the positions of the vertices
\begin{equation} \label{eq:edgeprob}
    p_{uv} = p(\bw_u,\bw_v,\bx_u,\bx_v) = \min \left\{\left(\frac{\bw_u \bw_v}{n\mu||\bx_u-\bx_v||^{d}}\right)^\gamma, 1 \right\},
\end{equation}
where $\gamma > 1$ is a fixed parameter. Here $||\cdot||$ denotes the  $\infty$-norm on the torus, that is, for any $x,y \in \mathbb{T}^d$
\begin{equation}
    ||x-y||:= \max_{1\leq i \leq d} |x_i - y_i|_C,
\end{equation}
where $|\cdot|_C$ is the distance on the circle $\mathbb{T}^1$. That is, for any $a,b \in \mathbb{T}^1$
\begin{equation}
    |a - b|_C := \min\{|a-b|, 1-|a-b|\}.
\end{equation}
Equation~\eqref{eq:edgeprob} shows an interesting relation between the properties of the vertices and their connection probabilities:
Two vertices with high weights are more likely to connect. However, vertices with high weights are \textit{rare} due to the power-law distribution in \eqref{eq:pl}. Moreover, two vertices are more likely to connect if their positions are close. However, again, the probability for two vertices to be close is small, as positions are uniformly distributed in \eqref{eq:poseq}.

{\change We conclude by noticing that the GIRG model here described is slightly different from the Geometric Inhomogeneous random graph defined by Bringmann et al. in~\cite{GIRG}. In the original model, the name GIRG refers to an entire class of random graphs, where the weights of the vertices are heavy-tailed (not necessarily Pareto), and where the connection probabilities are defined in an asymptotic sense. We choose to work with this \textit{simplified version} in order to obtain a sharp convergence for the number of cliques, rather than just an asymptotic result. Moreover, in the denominator of the connection probability \eqref{eq:edgeprob} we write $n \mu$ in place of the original $W := \sum_i \bw_i$ for ease of notation, as our results deal with the large $n$ limit, and $W/n \to \mu$ almost surely as $n \to \infty$.}

{\change
\paragraph{Organization of the paper.}
In Section \ref{sec:mainresults} we describe our main results, concerning a lower bound for localized cliques, the scaling of the total number of cliques, and the characterization of the typical cliques in the GIRG. In addition, we show simulations in support of our result, and we provide a short discussion. In Section \ref{sec:lowerbound} we prove the first result for localized cliques stated in Theorem \ref{thm:lowerbound}. Moreover, we formulate an optimization problem where the feasible region is formed by the pairs of vectors $(\balpha,\bbeta)$, which express the properties (weights and distances) of the vertices involved in a clique. In Section \ref{sec:totalcliques} we prove Theorem \ref{thm:totalcliques} regarding the precise asymptotics for the total number of cliques. Finally, we show that in two different regimes the features of the typical cliques are different (Theorem \ref{thm:typicalcliques}). 
%Finally, in Section \ref{sec:proofofprop} we derive the structure of the solution of the optimization problem we defined in Section~\ref{sec:lowerbound}.
}

\section{Main results}\label{sec:mainresults}

The aim of this paper is to study the emerging subgraph structures inside the $\GIRG$, as the number of vertices $n$ goes to infinity. In particular we are interested in computing the number of complete subgraphs (cliques):
{\change \begin{equation}
    N(\mathcal{K}_k) = \sum_{\bv} 1_{\{\GIRG|_{\bv} = \mathcal{K}_k\}},
\end{equation}
where the sum is over all possible combinations of $k$ vertices, $\bv \in \binom{V}{k}$, and $\GIRG|_{\bv}$ denotes the induced subgraph formed on the vertices $\bv$.}
In our computations we always assume that $k$ is small compared to $n$, in particular $k = O(1)$.
Our results do not only investigate the number of cliques, but also show where in the GIRG these cliques are most likely to be located in terms of their positions and their weights. In particular, we will consider weights and distances between the vertices as quantities scaling with $n$, and show that most cliques are found on vertices whose weights and distances scale as specific values of $n$.

%More specifically, we will start analyzing $N(\mathcal{K}_k,M)$ the number of cliques formed on $k$-lists of vertices in $M$, in place of the total number of cliques $N(\mathcal{K}_k)$ (i.e., the number of cliques formed on list of vertices in $V_k = \binom{V}{k}$, the set of all possible combinations of $k$ vertices).

\subsection{Lower bound for localized cliques}

For any $\eta \in \mathbb{R}$ and $\varepsilon \in (0,1)$ we denote $I_{\varepsilon}(n^{\eta}) = [\varepsilon (\mu n)^{\eta}, (\mu n)^{\eta}/\varepsilon]$. Fix a sequence $\boldsymbol{\alpha}= (\alpha_i)_{\newchange i = 1,...,k}$ of non-negative real numbers and a sequence $\boldsymbol{\beta}= (\beta_i)_{\newchange i =2,...,k}$ of non-positive real vectors of length $d$, where $d$ is the dimension of the GIRG. We set conventionally $\beta_1 := [-\infty,...,-\infty]$, and define
\begin{equation}\label{eq:Mab}
M_{\varepsilon}^{(\balpha,\bbeta)} = \left\{(v_1,...,v_k) \subset V: \bw_{v_i} \in I_{\varepsilon}(n^{\alpha_i}), |\bx_{v_i}^{(h)} - \bx_{v_1}^{(h)}|_C \in  I_{\varepsilon}(n^{\beta_i^{(h)}}), \forall i \in [k], h \in [d] \right\}.
\end{equation}
The set $M_{\varepsilon}^{(\balpha,\bbeta)}$ contains all the lists of $k$ vertices such that their weights scale as $n^{\alpha_1}\dots,n^{\alpha_k}$, and such that their distances from $v_1$ scale with $n$ as $n^{\beta_2},\dots,n^{\beta_k}$. In the definition \eqref{eq:Mab} of $M_{\varepsilon}^{(\balpha,\bbeta)}$ we implicitly set the origin of the torus at the position of $v_1$, because the edge probability of the GIRG defined in \eqref{eq:edgeprob} depends on the distances between the vertices, not on their absolute positions, and the positions are distributed uniformly. %Thus, in order to study the number of $k$-cliques in the GIRG, we can fix the position of any vertex $v_1$ without loss of generality (by symmetry), and allow the position parameters $\beta_2,...,\beta_k$ of the remaining $k-1$ vertices to vary.

{\change
We denote the number of $k$-cliques in the GIRG with vertices in $M_{\varepsilon}^{(\balpha,\bbeta)}$ by 
\begin{equation}
    N(\mathcal{K}_k,M_{\varepsilon}^{(\balpha,\bbeta)}) = \sum_{\bv} 1_{\{\GIRG|_{\bv} = \mathcal{K}_k,\;\bv \in \Mab\}}.
\end{equation} 
Furthermore, we define the function 
  \begin{equation}
        f(\balpha,\bbeta) = k + (1-\tau)\sum_i \alpha_i + \sum_{i>1,h} \beta_i^{(h)} + \sum_{i<j} \gamma \min\{\alpha_i+\alpha_j-1-d \max_h(\max\{\beta_i^{(h)},\beta_j^{(h)}\}),0\} \label{exponent}.
    \end{equation}

Then, the following theorem provides a lower bound on the number of such cliques:

\begin{thm}\label{thm:lowerbound}
\leavevmode
\begin{enumerate}[(i)]
    \item For any fixed 
    %$\balpha = (\alpha_i)_{i \in [k]}$, $\bbeta = (\beta_i^{(j)})_{i \in [k], j \in[d]}$,
    $\balpha,\bbeta$
    \begin{equation}\label{eq:lowerboundMab}
        \E[N(\mathcal{K}_k,M_{\varepsilon}^{(\balpha,\bbeta)})] = \Omega(n^{f(\balpha,\bbeta)}),
    \end{equation}
    with $f(\balpha,\bbeta)$ as in~\eqref{exponent}.
  
    \item Let $(\balpha^*,\bbeta^*)=\argmax f(\balpha,\bbeta)$. Then $\balpha^* = (\alpha^*,...,\alpha^*)$, $\bbeta^*=({\newchange \beta^*,...,\beta^*})$, where
    %$\balpha^* = (\alpha^*,...,\alpha^*),\bbeta^* = (\beta_1,\beta^*,...,\beta^*)$ with
    \begin{equation}
    \begin{gathered} \label{eq:maxfab}
    \alpha^*=\begin{cases}
        \frac{1}{2} & \text{if $k > \frac{2}{3-\tau}$}\\
        0 & \text{if $k < \frac{2}{3-\tau}$}
    \end{cases} \qquad  \beta^*=\begin{cases}
        [0,...,0] & \text{if $k > \frac{2}{3-\tau}$}\\
        [-\frac{1}{d},...,-\frac{1}{d}] & \text{if $k < \frac{2}{3-\tau}$}
    \end{cases}
    \end{gathered}
    \end{equation}

    {\newchange and there exists $c(\balpha^*,\bbeta^*)$ independent from $\varepsilon$ and $n$ such that  \begin{equation}\label{eq:lowerboundM}
       \lim_{n\to\infty} \P\left(\frac{N(\mathcal{K}_k,M_{\varepsilon}^{(\balpha^*,\bbeta^*)})}{n^{f(\balpha^*,\bbeta^*)}} \leq c(\balpha^*,\bbeta^*) \right) = 0.
    \end{equation} 
    }
\end{enumerate}
\end{thm}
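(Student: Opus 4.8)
The plan for (i) is a first-moment computation. By exchangeability, $\E[N(\K,\Mab)] = \binom nk\,\P(\text{a fixed }k\text{-tuple spans a }\K\text{ and lies in }\Mab)$, and I would write this probability as an integral over the $k$ weights and $k$ positions of $\prod_{i<j}p_{v_iv_j}$ against the Pareto density and the uniform position law, restricted to the box defining $\Mab$. To obtain the lower bound I keep the whole box: on it every weight lies in $[\varepsilon(\mu n)^{\alpha_i},(\mu n)^{\alpha_i}/\varepsilon]$ and every displacement $|\bx^{(h)}_{v_i}-\bx^{(h)}_{v_1}|_C$ sits at the scale $(\mu n)^{\beta_i^{(h)}}$, so the weight densities contribute $\Theta(n^{(1-\tau)\sum_i\alpha_i})$ and the positions $\Theta(n^{\sum_{i>1,h}\beta_i^{(h)}})$. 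The one geometric input is the triangle inequality $\|\bx_{v_i}-\bx_{v_j}\|\le 2\,n^{\max_h\max\{\beta_i^{(h)},\beta_j^{(h)}\}}$, valid throughout the box, which lower-bounds each $p_{v_iv_j}$ by a constant times $n^{\gamma\min\{\alpha_i+\alpha_j-1-d\max_h\max\{\beta_i^{(h)},\beta_j^{(h)}\},0\}}$ (the cap at $1$ matching the $0$ branch of the minimum). Multiplying these scales by $\binom nk=\Theta(n^k)$ reproduces the exponent $f(\balpha,\bbeta)$ of \eqref{exponent}, giving \eqref{eq:lowerboundMab}.

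For (ii) I would first solve $\max f$. The structural fact is that $f$ is concave: it is linear plus a sum of terms $\gamma\min\{g_{ij},0\}$ with $g_{ij}=\alpha_i+\alpha_j-1-d\max_h\max\{\beta_i^{(h)},\beta_j^{(h)}\}$ concave, and minima of concave functions are concave. The feasible set $\{\alpha_i\ge0,\ \beta_i^{(h)}\le0\}$ is convex and invariant under permuting $\{2,\dots,k\}$ and the coordinates, so averaging any maximizer over this group produces, by concavity, a maximizer with $\alpha_2=\dots=\alpha_k=:\alpha$ and all $\beta_i^{(h)}$ equal to a common $\beta\le0$, reducing $\max f$ to a concave program in $(\alpha_1,\alpha,\beta)$ whose optimum has $\alpha_1=\alpha$. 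The objective then reads $f(\alpha,\beta)=k+(1-\tau)k\alpha+d(k-1)\beta+\gamma\binom k2\min\{2\alpha-1-d\beta,0\}$, and I would show its maximum lies on the ridge $2\alpha-1-d\beta=0$ (the partial derivatives in either region both point toward it). On the ridge $f=1+[k(3-\tau)-2]\,\alpha$ with $\alpha\in[0,\tfrac12]$, maximized at $\alpha=\tfrac12,\ \beta=0$ when $k>\tfrac2{3-\tau}$ and at $\alpha=0,\ \beta=-\tfrac1d$ when $k<\tfrac2{3-\tau}$ — precisely \eqref{eq:maxfab}.

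For the high-probability bound \eqref{eq:lowerboundM} I would run the second moment method on $N:=N(\K,\M)$. Disjoint $k$-tuples involve disjoint weights, positions and edges, hence are independent, so only overlapping pairs feed the variance: $\Var(N)\le\E[N]+\sum_{\ell=1}^{k-1}\sum_{|\bv\cap\bv'|=\ell}\E[\mathbf 1_\bv\mathbf 1_{\bv'}]$. I would bound each overlap term from above using the scale estimates of (i): a shared vertex pins the two copies to the same weight/position scales and, in the localized regime $\beta^*=-1/d$, forces all $2k-\ell$ vertices into one ball of volume $\Theta(n^{-1})$, while in the regime $\alpha^*=\tfrac12$ it imposes $2k-\ell$ joint weight constraints of cost $n^{(1-\tau)/2}$ each. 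Either way the class of overlap $\ell\ge1$ contributes $n^{g_\ell}$ with $g_\ell$ strictly below $2f(\balpha^*,\bbeta^*)$ (the deficit being a factor $n^{-\ell(3-\tau)/2}$ in the high-weight regime and a fixed negative power of $n$ in the localized one), so $\Var(N)=o(\E[N]^2)$. With $\E[N]=\Omega(n^{f(\balpha^*,\bbeta^*)})$ from (i), Chebyshev gives $N\ge\tfrac12\E[N]\ge c\,n^{f(\balpha^*,\bbeta^*)}$ w.h.p.; monotonicity of $N$ in $\varepsilon$ lets $c$ be taken uniform in $\varepsilon$, which is \eqref{eq:lowerboundM}.

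The recurring obstacle is the edge dependence created by the geometry: the events $\{v_i\sim v_j\}$ are coupled through the shared positions, so neither the first-moment integral nor the overlap terms factor over edges, and everything must be routed through the triangle-inequality control of $\|\bx_{v_i}-\bx_{v_j}\|$ rather than treating edges as independent. I expect the most delicate point to be the variance estimate in the localized regime: one must verify that overlapping cliques genuinely collapse into a single $\Theta(n^{-1})$-ball, so that the position integrals gain the needed factors of $n^{-1}$, and that the (bounded, since $k=O(1)$) number of ways to glue the shared sub-clique does not erode the strict gap below $\E[N]^2$.
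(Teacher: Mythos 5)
Your proposal is correct, and for part (i) and for the concentration half of part (ii) it follows essentially the same route as the paper: the first moment is factored as $\binom{n}{k}\,\P(\bv\in\Mab)\cdot\P(\GIRG|_{\bv}=\mathcal{K}_k\mid\bv\in\Mab)$, with the triangle inequality supplying $\|\bx_{v_i}-\bx_{v_j}\| = O\bigl(n^{\max_h\max\{\beta_i^{(h)},\beta_j^{(h)}\}}\bigr)$ exactly as you do; and the paper's Lemma \ref{lemma:selfaveraging} likewise drops the clique events, bounds each overlapping pair $|\bv\cap\bu|=s\geq 1$ by the localization probabilities alone, and obtains a variance-to-squared-mean ratio $O(n^{g(s)})$ with $g(s)=-1$ in the geometric regime and $g(s)=-s(3-\tau)/2$ in the non-geometric one --- precisely your two ``deficit'' factors (your monotonicity-in-$\varepsilon$ remark is actually more explicit than the paper, which leaves the $\varepsilon$-uniformity of $c(\balpha^*,\bbeta^*)$ implicit in the constants of part (i)). Where you genuinely diverge is the identification of the maximizer \eqref{eq:maxfab}: the paper does not solve the optimization problem directly. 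Instead (Section \ref{sec:maxsolution}) it argues indirectly: the proof of Theorem \ref{thm:totalcliques}, via Lemma \ref{lemma:integralconvergence}, yields $\E[N(\mathcal{K}_k)]=O(n^{\max\{1,\,k(3-\tau)/2\}})$, while part (i) gives $\E[N(\mathcal{K}_k)]=\Omega(n^{f(\balpha,\bbeta)})$ for every feasible pair, so $f(\balpha,\bbeta)\leq\max\{1,k(3-\tau)/2\}$ everywhere; since the candidate $(\balpha^*,\bbeta^*)$ attains this value, it must be optimal. Your direct route --- concavity of $f$ (each $\gamma\min\{g_{ij},0\}$ is a minimum of concave functions), symmetrization over permutations of $\{2,\dots,k\}$ and of the coordinates, then a piecewise-linear ridge analysis --- is self-contained and does not presuppose the upper-bound machinery of Section \ref{sec:totalcliques}; this is a genuine advantage, since in the paper the optimization claim stated in Section \ref{sec:lowerbound} is only justified by results proved later, whereas your argument keeps Theorem \ref{thm:lowerbound} logically independent of Theorem \ref{thm:totalcliques}. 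Two small points to patch in your sketch: you assert without argument that the symmetrized program ``has $\alpha_1=\alpha$'' at the optimum --- but note that once all $\beta_i^{(h)}$ ($i\geq 2$) share a common value $\beta$, every pair term reduces to $\min\{\alpha_i+\alpha_j-1-d\beta,0\}$ (because $\beta_1=-\infty$), so $f$ becomes fully symmetric in $\alpha_1,\dots,\alpha_k$ and one further concavity averaging forces $\alpha_1=\alpha$; and before averaging a maximizer you should record that one exists, which is immediate since $f\to-\infty$ as any $\alpha_i\to\infty$ or $\beta\to-\infty$.
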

Observe that \eqref{eq:lowerboundM} is a lower bound for the number of cliques in the optimal $\M$, whereas in \eqref{eq:lowerboundMab}, for generic $\balpha,\bbeta$, we can only lower bound the expected number of cliques in $\Mab$. This follows from the fact that $N(\K,\M)$ is a self-averaging random variable, as we will prove later, but in general $N(\K,\Mab)$ is not.
}

{\change
\subsection{Total number of cliques and typical cliques}
Whereas Theorem~\ref{thm:lowerbound} provides results on cliques with restricted weights and positions, we now investigate the total number of cliques in the GIRG, $N(\K)$. In fact, we prove that the number of cliques formed outside the optimal set $\M$ asymptotically do not contribute to the total clique count. Observe that \eqref{eq:lowerboundM} represents a lower bound for the total number of cliques as well, since $N(\mathcal{K}_k) \geq N(\mathcal{K}_k,M_{\varepsilon}^{(\balpha^*,\bbeta^*)})$. We will show that $n^{f(\balpha^*,\bbeta^*)}$ is also an asymptotic upper bound for $N(\mathcal{K}_k)$.
This reasoning allows us to not only determine precise asymptotics for the total number of cliques, but to also show that there is a specific `typical clique' which dominates all other types of cliques.

We define the integrals
\begin{gather}
    J^{\text{NG}} := \int_{0}^{\infty} dy_1 \cdots \int_{0}^{\infty} dy_k \int_{[0,1]^d} dx_1 \cdots \int_{[0,1]^d} dx_k \; (y_1 \cdots y_k)^{-\tau} \prod_{i < j} \min\left\{1, \left(\frac{y_iy_j}{||x_i - x_j||^d}\right)^{\gamma}\right\}, \label{eq:JNG}\\
    J^{\text{G}} := \int_{w_0}^{\infty} dw_1 \cdots \int_{w_0}^{\infty} dw_k \int_{\mathbb{R}^d} dz_2 \int_{\mathbb{R}^d} dz_k \; (w_1 \cdots w_k)^{-\tau} \prod_{i < j} \min\left\{ 1, \left(\frac{w_1 w_j}{||z_i - z_j||^d} \right)^{\gamma} \right\}. \label{eq:JG}
\end{gather}
Here the superscripts NG and G stand for \textit{non-geometric} and \textit{geometric}. In the following theorem, we then prove that the rescaled total number of cliques converges in probability to these integrals:

\begin{thm}\label{thm:totalcliques}
\leavevmode
\begin{enumerate}[(i)]
    \item If $k > \frac{2}{3-\tau}$, then
    \begin{equation}
        \frac{N(\mathcal{K}_k)}{n^{(3-\tau)k/2}} \stackrel{p}{\longrightarrow} \frac{J^{\text{NG}}}{\mu^{(\tau-1)k/2}k!} < \infty.
    \end{equation}
    \label{thm:NGtotalcliques}
    \item If $k < \frac{2}{3-\tau}$, then
    \begin{equation}
        \frac{N(\mathcal{K}_k)}{n} \stackrel{p}{\longrightarrow} \frac{J^{\text{G}}}{\mu^{1-k}k!} < \infty.
    \end{equation}
    \label{thm:Gtotalcliques}
\end{enumerate}
\end{thm}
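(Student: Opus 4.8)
The plan is to combine the lower bound already furnished by Theorem~\ref{thm:lowerbound} with a matching first-moment computation and a concentration (self-averaging) argument restricted to the optimal window $\M$. Throughout write $f^\ast := f(\balpha^\ast,\bbeta^\ast)$; evaluating~\eqref{exponent} at the optimizer of~\eqref{eq:maxfab} gives $f^\ast=(3-\tau)k/2$ when $k>\frac{2}{3-\tau}$ (the $\min$ terms vanish since $\alpha^\ast+\alpha^\ast-1=0=\beta^\ast$) and $f^\ast=1$ when $k<\frac{2}{3-\tau}$ (the $\sum_{i>1,h}\beta_i^{(h)}=-(k-1)$ term cancels the $+k$). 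Since the two parts of the theorem correspond exactly to these two regimes, it suffices to treat a generic regime and identify the associated limiting integral, and the target constant $C$ (either $J^{\text{NG}}/(\mu^{(\tau-1)k/2}k!)$ or $J^{\text{G}}/(\mu^{1-k}k!)$).

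First I would compute the expectation. By exchangeability $\E[N(\K)] = \binom{n}{k}\,\E\bigl[\prod_{i<j}p_{v_iv_j}\bigr]$, and I would evaluate the integral over weights and positions through the change of variables dictated by the optimizer. In the regime $k>\frac{2}{3-\tau}$ I set $w_i=(\mu n)^{1/2}y_i$ and keep the positions of order one, so the edge factor becomes $\min\{1,(y_iy_j/\|x_i-x_j\|^d)^\gamma\}$ and the Pareto density contributes $(y_1\cdots y_k)^{-\tau}$; in the regime $k<\frac{2}{3-\tau}$ I keep the weights of order one and rescale the displacements $x_i-x_1=(\mu n)^{-1/d}z_i$, turning the edge factor into $\min\{1,(w_iw_j/\|z_i-z_j\|^d)^\gamma\}$. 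Tracking the Jacobians, the normalising constant of the Pareto density, and the prefactor $\binom{n}{k}\sim n^k/k!$, this produces $n^{f^\ast}$ times the constant $C$ stated in the theorem. The analytic content is (a) proving $J^{\text{NG}},J^{\text{G}}<\infty$ — for $J^{\text{NG}}$, integrability at the origin follows from $\gamma(k-1)>\tau-1$, which holds because $k>\tfrac{2}{3-\tau}>\tau$ on $(2,3)$, and at infinity from saturation of the $\min$ at $1$ together with $\tau>1$; for $J^{\text{G}}$, integrability in $z$ at spatial infinity follows from $\gamma>1$ and in $w$ from $\tau>1$ — and (b) justifying the limit (lower weight cutoff $w_0/(\mu n)^{1/2}\to0$, torus $\to$ cube/$\mathbb{R}^d$) by dominated convergence. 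Using that $f$ is strictly maximized at $(\balpha^\ast,\bbeta^\ast)$, I would also verify that configurations living on the non-dominant scale contribute only $o(n^{f^\ast})$ to the expectation, so the single rescaling captures the full leading term.

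The same computation restricted to $\M$ gives $\E[N(\K,\M)] = (C_\varepsilon+o(1))\,n^{f^\ast}$ with $C_\varepsilon\uparrow C$ as $\varepsilon\downarrow0$, since the $\varepsilon$-window expands to the whole domain of the integral. The heart of the argument is then to show that $N(\K,\M)$ self-averages, i.e. $\Var\bigl(N(\K,\M)\bigr)=o(n^{2f^\ast})$. I would expand $\E[N(\K,\M)^2]$ over pairs of $k$-sets sharing $\ell$ vertices, $\ell=0,\dots,k$; the term $\ell=0$ reproduces $\E[N(\K,\M)]^2(1+o(1))$, and it remains to show every overlap term $\ell\ge1$ is of smaller order. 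This is exactly where restricting to $\M$ is essential: confining all weights to the bounded window $I_\varepsilon(n^{\alpha^\ast})$ forbids the heavy-weight hubs through which two cliques would otherwise be strongly positively correlated — the mechanism that makes the unrestricted $N(\K,\Mab)$ fail to concentrate. Bounding these overlap contributions in the presence of the \emph{geometric} correlations between edges, where the same $\min$-truncation must be integrated over correlated displacement variables, is the step I expect to be the main obstacle; I would handle it by the region-decomposition and optimization bookkeeping already used for Theorem~\ref{thm:lowerbound}, now applied to the merged vertex set of the two overlapping cliques.

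Finally I would assemble the pieces. By Chebyshev, $N(\K,\M)/n^{f^\ast}\stackrel{p}{\to}C_\varepsilon$ for each fixed $\varepsilon$. For the complementary count $N(\K,\M^c)=N(\K)-N(\K,\M)$ I would use Markov together with $\E[N(\K,\M^c)]=(C-C_\varepsilon+o(1))n^{f^\ast}$: given $\delta>0$, choosing $\varepsilon$ so small that $C-C_\varepsilon<\delta^2$ forces $\P(N(\K,\M^c)\ge\delta n^{f^\ast})\le 2\delta$ for large $n$. Combining the two estimates squeezes $N(\K)/n^{f^\ast}$ into $(C-2\delta,C+2\delta)$ with probability at least $1-3\delta$; since $\delta$ is arbitrary, $N(\K)/n^{f^\ast}\stackrel{p}{\to}C$, which is the claim in both regimes.
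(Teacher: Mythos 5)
Your overall architecture coincides with the paper's: a first-moment computation on a typical window via the rescalings $w_i = y_i\sqrt{\mu n}$ (resp.\ $x_i - x_1 = (\mu n)^{-1/d} z_i$), a Markov bound on the complementary window whose expectation is $(C-C_\varepsilon+o(1))n^{f^*}$, self-averaging plus Chebyshev on the window itself, and a final squeeze as $\varepsilon \to 0$. The one structural difference — restricting to $\M$, which constrains both weights and positions, instead of the paper's $W^{\text{NG}}(\varepsilon)$ (weights only) or $W^{\text{G}}(\varepsilon)$ (positions only) — is harmless, since in each regime the extra constraint excludes a set whose contribution vanishes as $\varepsilon \to 0$.

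However, your step (a), the finiteness of $J^{\text{NG}}$ and $J^{\text{G}}$, is genuinely wrong as argued, and it is precisely the step where the hypothesis $k \gtrless \frac{2}{3-\tau}$ must do its work. The criteria you propose — $\gamma(k-1)>\tau-1$ at the origin for $J^{\text{NG}}$, and ``$\gamma>1$ in $z$, $\tau>1$ in $w$'' for $J^{\text{G}}$ — hold for \emph{every} $k\geq 3$, $\tau\in(2,3)$, $\gamma>1$ (indeed $(\tau-1)/(k-1)<1<\gamma$ always), i.e.\ in both regimes; yet $J^{\text{NG}}=\infty$ when $k<\frac{2}{3-\tau}$ and $J^{\text{G}}=\infty$ when $k>\frac{2}{3-\tau}$ — otherwise there would be no phase transition at all. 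The flaw is that you test integrability one variable at a time with the remaining variables held at unit scale, which misses the joint degeneration in which all $k$ weights become small together (for $J^{\text{NG}}$) or large together (for $J^{\text{G}}$). The paper's Lemma \ref{lemma:integralconvergence} handles this by bounding the clique by the star centered at the minimal-weight vertex: integrating the leaf positions turns each pair factor into $\Theta(\min\{1,y_1y_j\})$ (resp.\ into $\Theta(w_1w_j)$ after the $z$-integrations over $\mathbb{R}^d$, which do converge by $\gamma>1$ — that portion of your sketch is correct), and the \emph{nested} weight integrals over $y_j \geq y_1$ then produce $\int y_1^{k(3-\tau)-3}\,dy_1$, integrable at $0$ if and only if $k(3-\tau)>2$ and at $\infty$ if and only if $k(3-\tau)<2$. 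So for $J^{\text{G}}$ the binding constraint is the weight tail, not ``$\tau>1$'': after position integration the factors grow like $w_1w_j$, and the large-weight divergence is exactly what the hypothesis $k<\frac{2}{3-\tau}$ rules out. Without this computation your dominated-convergence step and the bound $\E[N(\K,\M^c)]=(C-C_\varepsilon+o(1))n^{f^*}$ have no justification, since $C$ itself may be infinite; with it, the rest of your argument goes through as in the paper.
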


We now focus on the shape of a \emph{typical} clique: what are the most appearing clique types in terms of the vertex weights and positions?
The following theorem describes such typical cliques. By adjusting the sensitivity $\varepsilon$, the probability that a clique of the GIRG lies in the optimal set $\M$ becomes arbitrarily big. In other words, the  cliques with vertices of weights and  distances in $\M$ are typical cliques in the GIRG.

To show this, we define 
\begin{gather}
    W^{\text{NG}}(\varepsilon) := \{(v_1,...,v_k) \subset V : \bw_i \in I_{\varepsilon}(\sqrt{n}), \; \forall i\},\\
    W^{\text{G}}(\varepsilon) := \{(v_1,...,v_k) \subset V : |\bx_i^{(h)} - \bx_1^{(h)}|_C \in I_{\varepsilon}(n^{-1/d}), \; \forall i>1,h \in [d]\}.
\end{gather}
Observe that these sets are slightly different from $\M$, as in $W^{\text{NG}}(\varepsilon)$ there are no bounds on the distances between the vertices, and in $W^{\text{G}}(\varepsilon)$ there are no bounds on the weights. Then, the following theorem shows that any given clique is in $W^{\text{NG}}(\varepsilon)$ or $W^{\text{G}}(\varepsilon)$ with arbitrarily high probability:
\begin{thm}\label{thm:typicalcliques}
For any $p \in (0,1)$ there exists $\varepsilon > 0$ such that
\begin{itemize}
    \item if $k > \frac{2}{3-\tau}$, 
    \begin{equation}
    \P(\bv \in W^{\text{NG}}(\varepsilon) \;|\; \GIRG|_{\bv} \text{ is a $k$-clique}) \geq p,
    \end{equation} 
    \item if $k < \frac{2}{3-\tau}$, 
    \begin{equation}
    \P(\bv \in W^{\text{G}}(\varepsilon) \;|\; \GIRG|_{\bv} \text{ is a $k$-clique}) \geq p.
    \end{equation} 
\end{itemize}
\end{thm}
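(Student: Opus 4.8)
The plan is to recognize that the conditional probability in the statement is, for every fixed $n$, an exact ratio of first moments, and then to evaluate the limit of this ratio using the same rescaled-integral computation that underlies Theorem~\ref{thm:totalcliques}. Since $\bv$ is a uniformly chosen $k$-tuple, independent of the graph, exchangeability over tuples gives
\[
\P\bigl(\bv \in W^{\text{NG}}(\varepsilon) \mid \GIRG|_{\bv} \text{ is a $k$-clique}\bigr) = \frac{\E[N(\mathcal{K}_k, W^{\text{NG}}(\varepsilon))]}{\E[N(\mathcal{K}_k)]},
\]
where $N(\mathcal{K}_k, W^{\text{NG}}(\varepsilon))$ counts the $k$-cliques whose vertices additionally lie in $W^{\text{NG}}(\varepsilon)$. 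Thus the random fluctuations of the clique count play no role, and it suffices to control these two first moments.

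Consider first the regime $k > \frac{2}{3-\tau}$. I would carry out the same change of variables as in the proof of Theorem~\ref{thm:totalcliques}(i): writing each weight as $\bw_i = \sqrt{\mu n}\, y_i$ while keeping the positions on $[0,1]^d$, the edge probability $p_{ij}$ becomes exactly $\min\{1, (y_i y_j/\|x_i - x_j\|^d)^\gamma\}$, and the weight density together with $\binom{n}{k}$ produces a common prefactor of order $n^{(3-\tau)k/2}$. Writing $J^{\text{NG}}_\varepsilon$ for the integral \eqref{eq:JNG} restricted to the region where every $y_i \in [\varepsilon,1/\varepsilon]$ (the positions still ranging over $[0,1]^d$), the only effect of the constraint $\bw_i \in I_\varepsilon(\sqrt{n})$ is exactly this restriction of the rescaled weights, so
\[
\frac{\E[N(\mathcal{K}_k)]}{n^{(3-\tau)k/2}} \to C\, J^{\text{NG}}, \qquad \frac{\E[N(\mathcal{K}_k, W^{\text{NG}}(\varepsilon))]}{n^{(3-\tau)k/2}} \to C\, J^{\text{NG}}_\varepsilon,
\]
with one and the same constant $C$, since the prefactor originates solely from $\binom{n}{k}$ and the normalization of the weight density, both unaffected by the restriction. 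The first limit is the content of Theorem~\ref{thm:totalcliques}(i).

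The two limits give $\P(\bv \in W^{\text{NG}}(\varepsilon) \mid \text{clique}) \to J^{\text{NG}}_\varepsilon / J^{\text{NG}}$ as $n \to \infty$, and it remains to let $\varepsilon \downarrow 0$. As $\varepsilon \downarrow 0$ the region $\{y_i \in [\varepsilon,1/\varepsilon]\}$ increases to $(0,\infty)^k$, the integrand in \eqref{eq:JNG} is non-negative, and $J^{\text{NG}} < \infty$ by Theorem~\ref{thm:totalcliques}; hence monotone convergence yields $J^{\text{NG}}_\varepsilon \uparrow J^{\text{NG}}$, so $J^{\text{NG}}_\varepsilon/J^{\text{NG}} \to 1$. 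Given $p \in (0,1)$ I would therefore fix $\varepsilon$ small enough that $J^{\text{NG}}_\varepsilon/J^{\text{NG}} > p$; then for all large $n$ the conditional probability exceeds $p$. The regime $k < \frac{2}{3-\tau}$ is symmetric: one rescales the relative positions by $z_i = (\mu n)^{1/d}(\bx_i - \bx_1)$ and leaves the weights of order one, so that the constraint defining $W^{\text{G}}(\varepsilon)$ restricts each $z_i^{(h)}$ to $[\varepsilon,1/\varepsilon]$ in absolute value; the analogous restricted integral $J^{\text{G}}_\varepsilon$ then increases to $J^{\text{G}} < \infty$ by the same monotone-convergence argument.

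The main obstacle is the justification of the two first-moment limits above, in particular the unrestricted one $\E[N(\mathcal{K}_k)]/n^{(3-\tau)k/2} \to C\,J^{\text{NG}}$: one must verify that the contributions to the expected clique count from atypical weight scales (weights far from $\sqrt{\mu n}$, i.e.\ $y_i$ near $0$ or $\infty$) do not overwhelm the window $y_i \in [\varepsilon,1/\varepsilon]$, which is precisely where the regime condition $k > \frac{2}{3-\tau}$ and the finiteness of $J^{\text{NG}}$ are used. This control is, however, exactly the first-moment computation performed in the proof of Theorem~\ref{thm:totalcliques}, so it can be invoked rather than redone; by contrast the restricted moment $\E[N(\mathcal{K}_k, W^{\text{NG}}(\varepsilon))]$ is easier, as the rescaled weights there live in a fixed compact set bounded away from $0$ and $\infty$.
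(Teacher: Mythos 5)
Your proposal is correct and takes essentially the same route as the paper: the paper likewise writes the conditional probability as the ratio $\P(\bv \in W^{(\star)}(\varepsilon),\; \GIRG|_{\bv} = \mathcal{K}_k)/\P(\GIRG|_{\bv} = \mathcal{K}_k)$, evaluates numerator and denominator via the rescaled-integral computations from the proof of Theorem~\ref{thm:totalcliques}, and concludes from Lemma~\ref{lemma:integralconvergence} that $J^{\text{NG}}(\varepsilon) \to J^{\text{NG}}$ and $J^{\text{G}}(\varepsilon) \to J^{\text{G}}$ increasingly as $\varepsilon \to 0$. Your write-up is, if anything, slightly more explicit than the paper about the order of limits (first $n \to \infty$ at fixed $\varepsilon$, then choosing $\varepsilon$ so that the limiting ratio exceeds $p$), but the decomposition, the key computations invoked, and the monotone-convergence conclusion are the same.
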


In particular, Theorem \ref{thm:totalcliques} and Theorem \ref{thm:typicalcliques} show that there exists a phase transition for the number of cliques in the GIRG, depending on $k$ and $\tau$. When $k < \frac{2}{3-\tau}$, the number of cliques scales as $n$. Furthermore, most cliques appear between vertices with small distances, proportional to $n^{\bbeta^*}=n^{-1/d}$, and with no conditions on the degrees. As the power-law degree distribution ensures that most vertices have low degrees, this means that these cliques typically appear between low-degree vertices.  On the contrary, when $k > \frac{2}{3-\tau}$ the 
number of cliques scales as $n^{k(3-\tau)/2} \gg n$. In this case, most of the cliques are formed on vertices with high weights, proportional to $n^{\balpha^*}=\sqrt{n}$, but arbitrarily far from each other.

In the latter case the geometry does not influence the dominant clique structure. That is, the scaling of the number of cliques does not depend on the geometric features of the model (the positions of the vertices). Indeed, the scaling in Theorem~\ref{thm:totalcliques}\ref{thm:NGtotalcliques} is equivalent to the analogous result for scale-free configuration models, in which there is no geometry (Theorem 2.2 in \cite{ECMsubgraphs}). This motivates the choice of the terms \textit{geometric} and \textit{non-geometric} to distinguish the two different regimes in the $\tau$-$k$ plane.

Another way to interpret this phase transition is the following: for all $k \geq 3$ there exists a threshold $\tau_k := 3 - 2/k$ such that when the parameter $\tau$ of the GIRG is below this threshold the cliques of size $k$ are typically non-geometric; when $\tau$ is above $\tau_k$ most of the $k$-cliques are geometric (see Figure \ref{fig:phase-transition}).
}

\begin{figure}[tb]
    \centering
    \includegraphics[width=0.6\textwidth]{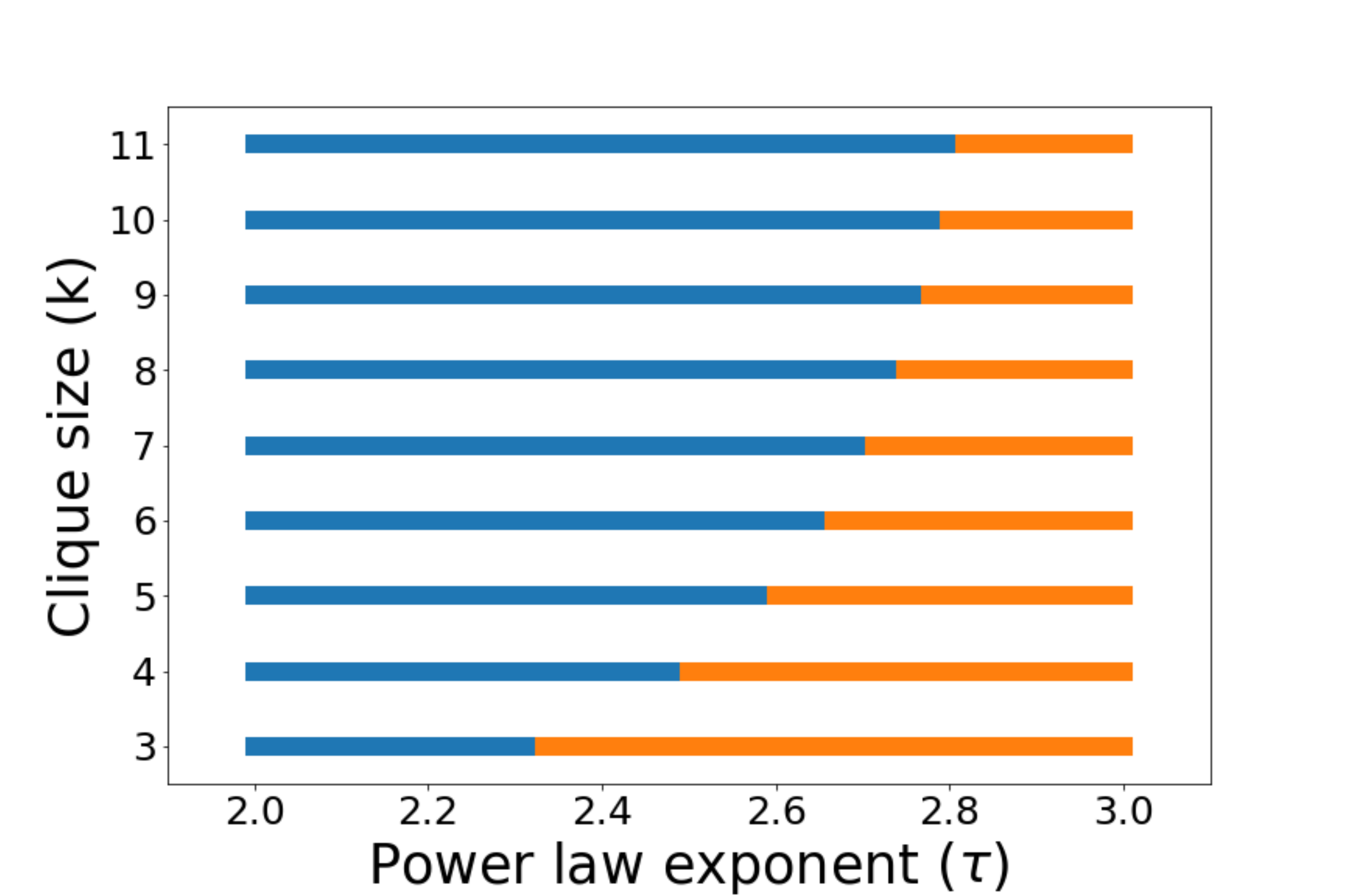}
    \caption{Phase transition emerging from Theorem \ref{thm:totalcliques}. The blue region corresponds to the non-geometric region ($k > \frac{2}{3-\tau}$), where most of the cliques generate independently from the geometry of the system; whereas, in orange is the geometric region, where cliques appear most likely between vertices at distance $\Theta(n^{-\frac{1}{d}})$.}
    \label{fig:phase-transition}
\end{figure}

\subsection{Simulations}

To illustrate our results, we provide simulations of the number of triangles in the GIRG. For each sample of the GIRG we count the number of triangles, and compare it to the expected asymptotic behaviour predicted by Theorem \ref{thm:totalcliques}. In \cite{efficientGIRG}, Bl\"asius et al. provide an algorithm to sample GIRGs efficiently, with expected running time $\Theta(n+m)$ (where $n,m$ denote the number of vertices and edges of the GIRG). We make use of a C++ library which implements this algorithm \cite{C++samplingGIRG,wrappersamplingGIRG}. 
%The required parameters for each sample of the GIRG model are: $n$, $\tau$, $d$ and the \textit{temperature} $1/\gamma$.
To count the number of triangles in a GIRG, we use the \textit{forward algorithm} \cite{forward-algo,triangles-counting}, which has a running time of $O(m^{3/2})$, where $m$ denotes the number of edges. As in the GIRG model the number of edges scales as the number of vertices~\cite{GIRG}, this is therefore equivalent to a running time of $O(n^{3/2})$. 

Figure \ref{fig:nongeo} plots the number of triangles against the number of vertices $n$ for the two regimes of $\tau$ distinguished by Theorem~\ref{thm:totalcliques} for triangles ($k=3$): $\tau<7/3$ and $\tau>7/3$. Indeed, for $\tau=2.1$, Figure \ref{fig:loglog-nongeo} and Theorem~\ref{thm:totalcliques} show that the asymptotic behaviour of $N(\mathcal{K}_3)$ is $\Theta_{\P}(n^{1.35})$. Instead, in Figure \ref{fig:loglog-geo} $\tau=2.7$, and as predicted by the Theorem $N(\mathcal{K}_3) = \Theta_{\P}(n)$. These different asymptotic slopes are shown in Figures \ref{fig:loglog-nongeo} and \ref{fig:loglog-geo}, and in both cases, our simulations follow the asymptotic slopes quite well. Moreover, the simulations show that these asymptotics are already visible for networks with sizes of only thousands of vertices, and even less.
More details about the simulations are collected in the Git repository \cite{gitsimulation}.

\begin{figure}[tb]
     \centering
    %  \begin{subfigure}[b]{0.49\textwidth}
    %      \centering
    %      \includegraphics[width=\textwidth]{normal-ple2.1-scale0.4-gammas.eps}
    %      \caption{Linear plot}
    %      \label{fig:normal-nongeo}
    %  \end{subfigure}
    %  \hfill
     \begin{subfigure}[b]{0.49\textwidth}
         \centering
         \includegraphics[width=\textwidth]{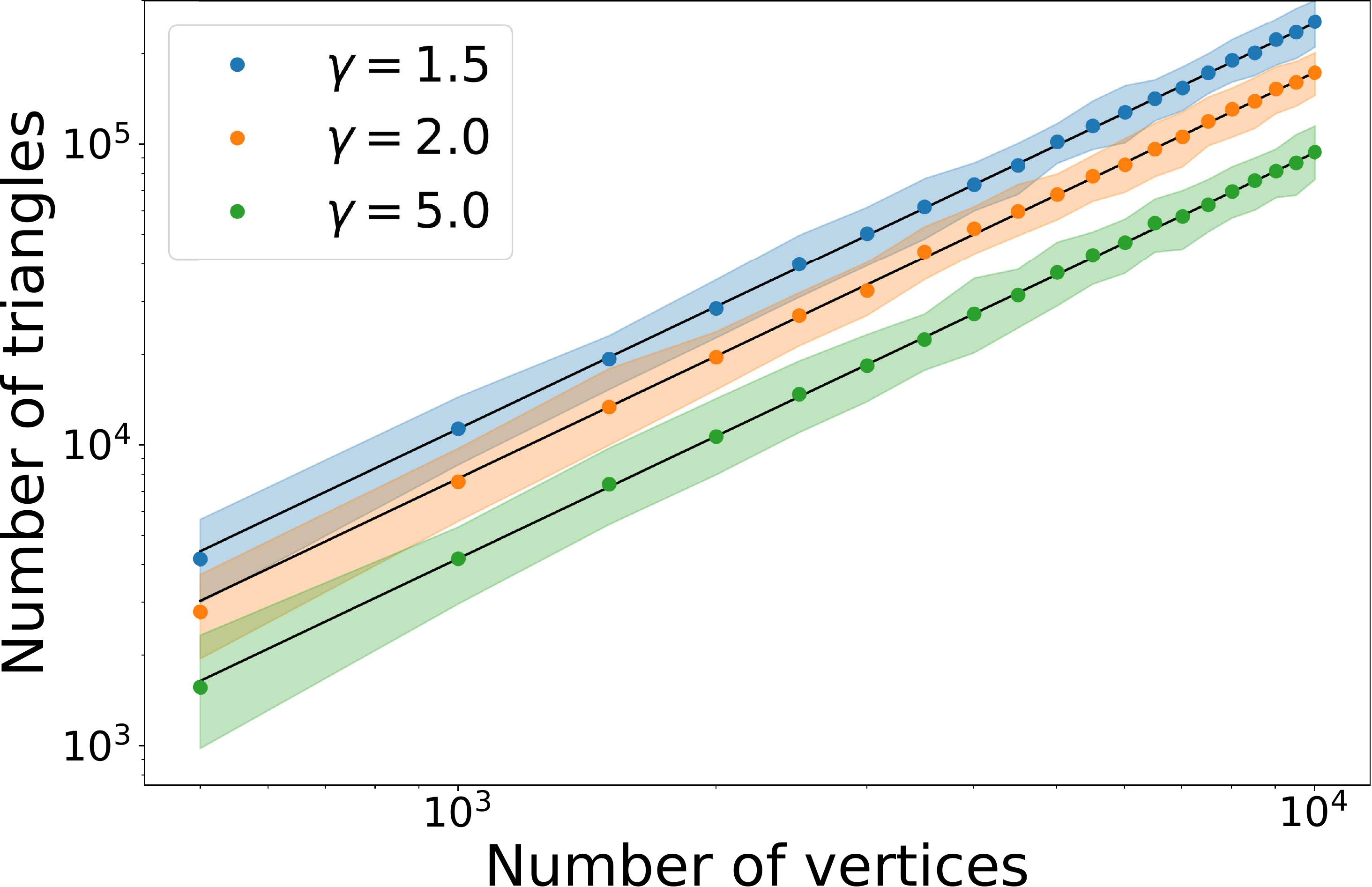}
         \caption{\textit{Non-geometric case}: $\tau = 2.1$}
         \label{fig:loglog-nongeo}
    \end{subfigure}
    \hfill
         \begin{subfigure}[b]{0.49\textwidth}
         \centering
         \includegraphics[width=\textwidth]{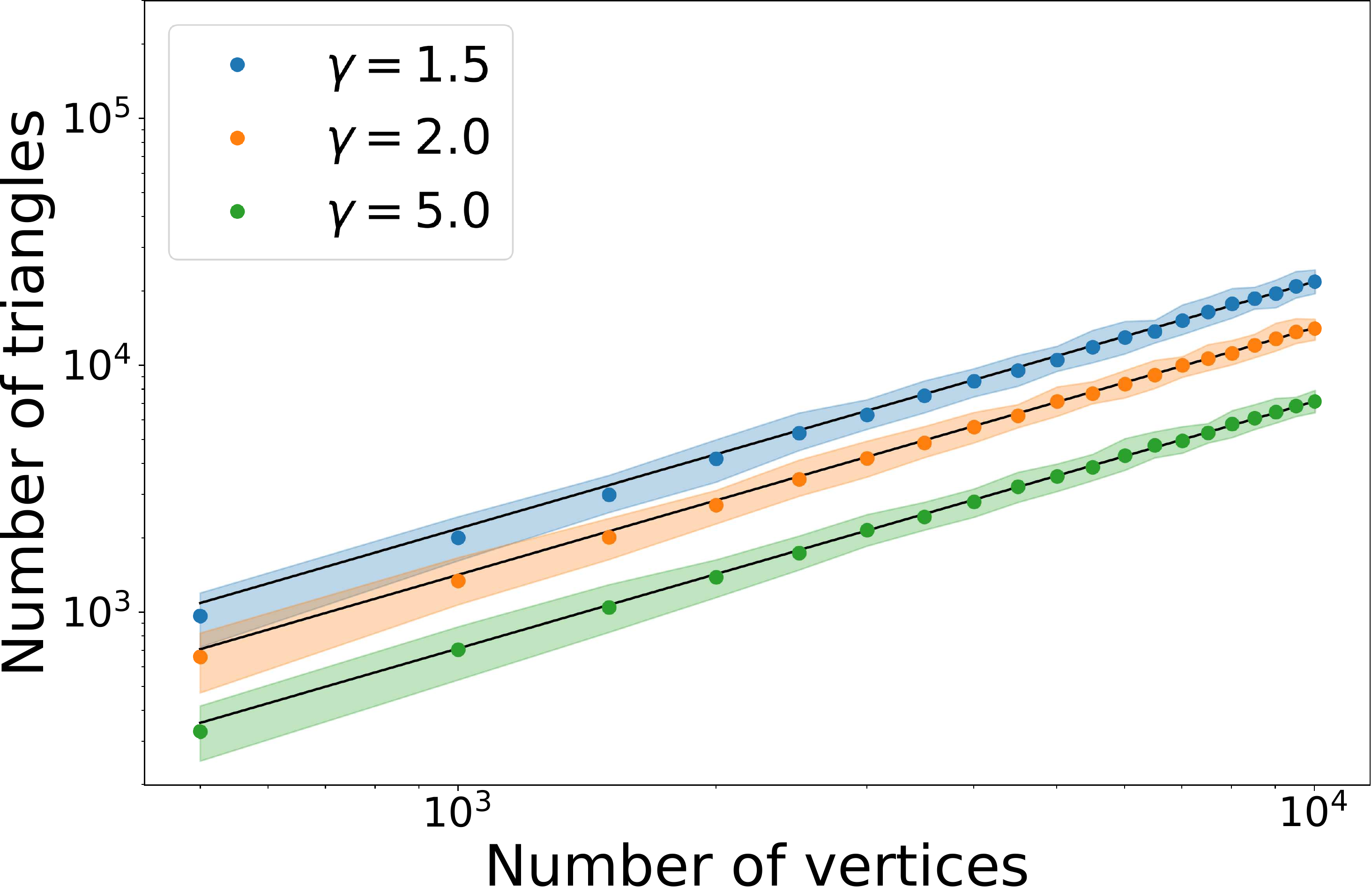}
         \caption{\textit{Geometric case}: $\tau = 2.7$}
         \label{fig:loglog-geo}
    \end{subfigure}
    \caption{The number of triangles against  $n$ for different values of $\gamma$ for $d=1$. Black curves show the asymptotic behaviour of $N(\mathcal{K}_3)$ predicted by Theorem \ref{thm:totalcliques}, colored dots indicate the average number of triangles over 100 samples of the GIRG. The colored regions contain 80\% of all samples.}
        \label{fig:nongeo}
\end{figure}

\subsection{Discussion}
In this paper, we analyze the number of cliques in a general model that incorporates power-law degrees as well as geometry. We also investigate the typical structure of a clique of any given size, and show that this structure depends on the clique size and the power-law exponent. We now discuss some implications of our main results. 

\paragraph{Non-geometry for $\tau<7/3$.} We now analyze the phase-transition found in Theorems~\ref{thm:totalcliques}-\ref{thm:typicalcliques} in more detail. Interestingly, when $\tau < 7/3$, the most common $k$-cliques are non-geometric for any $k \geq 3$. Furthermore, Theorem~\ref{thm:totalcliques} shows that in this setting, the number of cliques scales as $n^{k(3-\tau)/2}$ for all $k$, which is the same scaling in $n$ as in many non-geometric scale-free models, such as in the inhomogeneous random graph, the erased configuration model and the uniform random graph~\cite{stegehuis2019,stegehuis2021,ECMsubgraphs}. This seems to imply that when $\tau<7/3$, we cannot distinguish geometric and non-geometric scale-free networks by counting the number of cliques, or by their clustering coefficient. Thus, in this regime of $\tau$, the geometry of the GIRG does not add clustering.

On the other hand, when $\tau > \frac{7}{3}$, then small cliques and large cliques behave differently (see Figure \ref{fig:phase-transition}). Indeed, small cliques are typically present on close by vertices of distances as low as $n^{-1/d}$. Furthermore, the number of such small cliques scales as $n$, which is larger than the clique scaling of $n^{k(3-\tau)/2}$ in the inhomogeneous random graph without geometry~\cite{stegehuis2019}. Thus, for $\tau>7/3$, smaller cliques are influenced by geometry, whereas large cliques are not. In this case, it is clearly possible to distinguish between geometric and non-geometric inhomogeneous random graphs through small clique counts. Therefore, studying such statistical tests that distinguish geometric and non-geometric random graphs in more detail would be an interesting avenue for further research.

{\newchange
\paragraph{Intuition behind optimal $\balpha^*$ and $\beta^*$.} 
The intuition behind the optimal  $\balpha^*$ and $\beta^*$ values follows from two extreme examples of clique formation. In the \emph{geometric} setting, $\alpha_i=0$ and $\beta_i=-1/d$ for all vertices in the clique. Thus, these cliques are among constant-weight vertices that have pairwise distances of order $n^{-1/d}$. Now by the connection probabilities~\eqref{eq:edgeprob}, any two vertices of constant weight and pairwise distance $n^{-1/d}$ have a constant probability of being connected. Furthermore, as the positions of the vertices on the torus are uniformly distributed, the probability that $k-1$ uniformly chosen other vertices are within a ball of radius $n^{-1/d}$ of a specific vertex scales as $n^{-(k-1)}$. As there are $n^k$ ways to choose $k$ different vertices, this intuitively gives that $\Theta(n)$ such cliques exist, in line with Theorem~\ref{thm:totalcliques}. 

Now for the \emph{non-geometric} setting, $\alpha_i=1/2$ and $\beta_i=0$ for all vertices in the clique. Thus, these cliques are among vertices of weight $\sqrt{n}$ that have constant pairwise distances.The connection probability~\eqref{eq:edgeprob} ensures that all vertices of weight $\sqrt{n}$ have a constant connection probability. In fact, $\sqrt{n}$ is the minimal weight such that a constant connection probability among all vertices is ensured. By the power-law weight distribution, the number of $\sqrt{n}$-weight vertices is $\Theta(n \sqrt{n}^{-\tau+1}) = \Theta(n^{(3-\tau)/2})$. Every $k$-subset of these vertices forms a $k$-clique, so the number of such $k$-cliques is $\Theta(n^{k(3-\tau)/2})$, in line with Theorem~\ref{thm:totalcliques}. 

Thus, Theorem~\ref{thm:totalcliques} shows that the clique number is driven by either the geometry-driven triangles between vertices of constant degrees, or the geometry-independent cliques formed by vertices which connect independent of their position. All other types of cliques do not contribute to the leading order of the number of cliques. In fact, Theorem~\ref{thm:typicalcliques} shows this formally: a randomly chosen clique is with arbitrarily high probability a geometric clique, or an non-geometric clique, depending on the values of $\tau$ and $k$.

\paragraph{Larger values of $k$}
In this paper, we investigate the number of cliques of a fixed size $k=O(1)$. Another interesting question is what happens when $k$ grows as a function of $n$ as well. Note that Theorem~\ref{thm:totalcliques} predicts that for large (but constant) $k$, the non-geometric cliques dominate. We believe that when $k$ grows sufficiently small in $n$, the number of cliques is dominated by $\sqrt{n}$ vertices. However, at some point $k$ is so large that not enough weight-$\sqrt{n}$ vertices exist to form a $k$-clique. This could imply that for larger $k$ the optimal clique types is not among $\sqrt{n}$-weight vertices anymore, or that such a large clique does not exist in the GIRG model with high probability. 
}

\paragraph{Insensitivity to $\gamma$.} In all our results, the parameter $\gamma$ does not contribute to the asymptotic behaviour of $N(\mathcal{K}_k,M_{\varepsilon}^{(\balpha^*,\bbeta^*)})$ and $N(\mathcal{K}_k)$, nor to the determination of the phase-transition. This may appear counter intuitive, as the edge probability defined in \eqref{eq:edgeprob} decreases as $\gamma$ increases. Thus, for higher values of $\gamma$, fewer edges and therefore fewer cliques should appear. However, a direct computation shows that if $(v_1,...,v_k)$ is a $k$ combination of vertices in the optimal set $M_{\varepsilon}^{(\balpha^*,\bbeta^*)}$, then they connect with probability $\Theta(1)$, regardless of the value of $\gamma$. This explains why asymptotically the value of $\gamma$ is irrelevant, as long as $\gamma > 1$. {\change However, $\gamma$ is present in the integrals $J^{\text{NG}},J^{\text{G}}$ defined in \eqref{eq:JNG} and \eqref{eq:JG}. Thus, the temperature parameter only affects the leading order constant coefficient of Theorem~\ref{thm:totalcliques}\ref{thm:Gtotalcliques}.}

\paragraph{General heavy-tailed weight distributions.}
In~\eqref{eq:pl}, the weight of the vertices follows a Pareto distribution with power-law exponent $\tau \in (2,3)$. That is, the weights are characterized by the probability density function probability density function
\begin{equation}\label{eq:paretopdf}
    f_{\bw}(w) = 
    \begin{cases}
    \ell/w^{\tau} & \text{ if } w \geq w_0,\\
    0 & \text{ if } w < w_0.
    \end{cases}
\end{equation}
for some $w_0 > 0$, with $\ell = (\tau-1) w_0^{\tau-1}$.
However, since the results summarized in the current section hold asymptotically, we believe that the proofs of Theorem \ref{thm:lowerbound}-\ref{thm:typicalcliques} still hold with more general heavy-tailed weight distributions. We can consider a probability density function whose behaviour at infinity is \textit{similar} to the behaviour of a power law function. This is done replacing $\ell$ in \eqref{eq:paretopdf} with a bounded slowly varying function, that is, a bounded measurable function $\ell:(w_0,\infty) \to (0, \infty)$ such that $\lim_{w \to \infty} \ell(aw)/\ell(w) = 1$, for all $a > 0$, and such that $f_{\bw}(w)$ is a probability density function. Under this new hypothesis, {\change we believe that the scaling for the number of cliques described in Theorem \ref{thm:totalcliques} may be slightly affected by a slowly varying function, but we conjecture that the typical cliques is the same as in Theorem \ref{thm:typicalcliques}. This problem remains open for future research.}

\paragraph{Relation to hyperbolic random graphs.} In the past decade, hyperbolic random graphs \cite{krioukov2010} have been studied widely, as random graph models that include both geometry and scale-free vertex degrees. The downside of analyzing hyperbolic random graphs is that they come with hyperbolic sine and cosine functions, which are typically difficult to work with. One way to overcome such problem is to exploit the equivalence between the hyperbolic random graph and the $\mathbb{S}^1$ model described by Krioukov et al. in \cite{krioukov2010}. However, hyperbolic random graphs can also be seen as a special case of geometric inhomogeneous random graphs, when the dimension is $d=1$ and the temperature is $1/\gamma = 0$ (see \cite{GIRG}, Section 4). 
{\change It is interesting to observe that, although the model we defined differs slightly from the original GIRG in \cite{GIRG}, Theorem \ref{thm:totalcliques} coincides with the result shown by Bl\"asius, Friedrich, and Krohmer~\cite{cliques-hrg} for the expected number of cliques in the Hyperbolic random graph. Indeed, they proved that there exists two different regimes for the number of cliques, depending on the size $k$, where the transition point between the different regimes agrees with the one we here obtain for the more general GIRG.}

\section{Lower bound for localized cliques}\label{sec:lowerbound}
{\change In this section, we will prove Theorem~\ref{thm:lowerbound}. 
We recall that \begin{equation}
    N(\mathcal{K}_k,M_{\varepsilon}^{(\balpha,\bbeta)}) = \sum_{\bv} 1_{\{\GIRG|_{\bv} = \mathcal{K}_k, \; \bv \in M_{\varepsilon}^{(\balpha,\bbeta)}\}}.
\end{equation}
The indicator functions inside the sum are heavily correlated, so that it is difficult to determine the law $N(\mathcal{K}_k,M_{\varepsilon}^{(\balpha,\bbeta)})$. We therefore first study its expected value
\begin{align}
    \E[N(\mathcal{K}_k,M_{\varepsilon}^{(\balpha,\bbeta)})] &= \sum_{\bv}  \P(\GIRG|_{\bv} = \mathcal{K}_k, \bv \in M_{\varepsilon}^{(\balpha,\bbeta)}) \nonumber \\
    &= \sum_{\bv}  \P(\GIRG|_{\bv} = \mathcal{K}_k \;|\; \bv \in M_{\varepsilon}^{(\balpha,\bbeta)}) \cdot \P(\bv \in M_{\varepsilon}^{(\balpha,\bbeta)}) \label{eq:meansplitted}
\end{align}
The terms in the right hand side of \eqref{eq:meansplitted} can be lower bounded, as we will see. 
%However, from such analysis we only obtain a lower bound of $\E[N(\K,\Mab)]$, for each choice of $\balpha,\bbeta$.

To refine this lower bound, we solve an optimization problem, to retrieve the maximum possible lower bound. This also provides a lower bound for the expected total number of cliques. 
Then, we introduce a \textit{self-averaging} argument, to prove that $N(\K,\M)$ converges to its mean.

\subsection{Proof of Theorem \ref{thm:lowerbound}(i)}

%We write the expected value $\E[N(\K,\Mab)]$ as in equation \eqref{eq:meansplitted}.

We start by computing the second term in~\eqref{eq:meansplitted}, the probability that a combination of $k$ vertices $\bv = (v_i)_{i \in [k]}$ is in $M_{\varepsilon}^{(\balpha,\bbeta)}$:
\begin{equation}
    \P(\bv \in M_{\varepsilon}^{(\balpha,\bbeta)}) = \P\left(\bw_{v_i} \in I_{\varepsilon}(n^{\alpha_i}), |\bx_{v_i}^{(j)} - \bx_{v_1}^{(j)}|_C \in  I_{\varepsilon}(n^{\beta_i^{(j)}}), \forall i \in [k], j \in [d]\right).
\end{equation}
Each weight and position is sampled independently, thus we can split the probability into a product. Recall that $v_1$ has fixed position, so that we can write
\begin{align}
    \P(\bv \in M_{\varepsilon}^{(\balpha,\bbeta)}) &= \prod_{i \geq 1} \P(\bw \in [\varepsilon n^{\alpha_i},n^{\alpha_i}/\varepsilon]) \cdot \prod_{i \geq 2} \P(\bx \in I_{\varepsilon}(n^{\beta_i^{(1)}}) \times \cdots \times I_{\varepsilon}(n^{\beta_i^{(d)}})) \nonumber \\
    &= \prod_{i \geq 1} \left(\frac{w_0}{(\mu n)^{\alpha_i}}\right)^{\tau -1} \left(\varepsilon^{1-\tau} - \varepsilon^{\tau -1} \right) \cdot \prod_{i \geq 2} 2 (\varepsilon^{-1}-\varepsilon)^d (\mu n)^{\sum_j \beta_i^{(j)}} \nonumber \\
    &= C_1 \cdot n^{(1-\tau)\sum_i \alpha_i + \sum_{i\geq2,j} \beta_i^{(j)}}, \label{eq:prob_vinMab}
\end{align}
%where in the second term of r.h.s. of first equality we used the fact that the positions are uniformly distributed and that the position of $v_1$ represents the origin of the torus.
where $C_1$ is a constant independent from $n$.

Now we compute the other term in~\eqref{eq:meansplitted}, the probability that $\bv$ forms a clique, given that $\bv\in M_{\varepsilon}^{(\balpha,\bbeta)}$. The distances of all vertices from $v_1$ satisfy
$$||\bx_{v_i} - \bx_{v_1}|| = \Theta(n^{\max_h(\beta_i^{(h)})}).$$
However, we do not know what is the distance between any pair of vertices not involving $v_1$, as the $\beta$ terms describe the distance of any vertex from $v_1$. In turn, we cannot compute directly the edge probabilities between $v_i$ and $v_j$, with $i,j \neq 1$. Still, we can use the triangle inequality to show that
$$|\bx_{v_i}^{(h)}-\bx_{v_j}^{(j)}|_C \leq |\bx_{v_i}^{(h)}-\bx_{v_1}^{(h)}|_C + |\bx_{v_j}^{(h)}-\bx_{v_1}^{(h)}|_C = O(n^{\max\{\beta_i^{(h)},\beta_j^{(h)}\}}),$$
and consequently
$$||\bx_{v_i}-\bx_{v_h}|| = O(n^{\max_h(\max\{\beta_i^{(h)},\beta_j^{(h)}\})}).$$
Then, we can write the probability that the vertices in $\bv$ form a clique, given that $\bv \in \Mab$ as
\begin{align}
     \P(\GIRG|_{\bv} = \mathcal{K}_k \;|\; \bv \in M_{\varepsilon}^{(\balpha,\bbeta)}) &= \prod_{i<j} \min\left\{ \left( \frac{\Theta(n^{\alpha_i}) \Theta(n^{\alpha_j})}{\mu n \cdot  O(n^{d\max_h(\max\{\beta_i^{(h)},\beta_j^{(h)}\})})} \right)^{\gamma}, 1 \right\} \nonumber \\
     &= \Omega(n^{\sum_{i<j} \gamma \min\{\alpha_i + \alpha_j - 1 - d\max_h(\max\{\beta_i^{(h)},\beta_j^{(h)}\}), 0\} }). \label{eq:prob_visK}
\end{align}
Lastly, observe that the number of combinations of $k$ vertices in $V$ (the ways to choose $\bv$) is $\binom{n}{k} = \Theta(n^k)$. Summing up,~\eqref{eq:prob_vinMab} and \eqref{eq:prob_visK} yield
$$ \E[N(\K,\Mab)] = \Omega(n^{k + (1-\tau)\sum_i \alpha_i + \sum_{i\geq2,j} \beta_i^{(j)} + \sum_{i<j} \gamma \min\{\alpha_i + \alpha_j - 1 - d\max_h(\max\{\beta_i^{(h)},\beta_j^{(h)}\}), 0\} }). $$
\qed
}

\subsection{Optimization problem}

Theorem \ref{thm:lowerbound}(i) yields an asymptotic lower bound for the expected value of cliques in the GIRG whose vertices lie in $M_{\varepsilon}^{(\balpha,\bbeta)}$. For any $\balpha,\bbeta$, $n^{f(\balpha,\bbeta)}$ is also an asymptotic lower bound for the expected total number of cliques, as $N(\mathcal{K}_k, M_{\varepsilon}^{(\balpha,\bbeta)}) \leq N(\mathcal{K}_k)$. 
We therefore sharpen the lower bound for the expected total number of cliques by finding the maximum exponent $f(\balpha,\bbeta)$. 

Consider the problem
\begin{equation}\label{optproblem}
\begin{aligned}
    \max_{\balpha,\bbeta} \quad& f(\balpha,\bbeta) \\
    \textup{s.t.} \quad& \alpha_i \geq 0, \quad \;\forall \; i \in [k]\\
    & \beta_i^{(h)} \leq 0, \quad  \forall \; i>1, h \in [d]
\end{aligned}
\end{equation}

{ \newchange As a consequence of Theorem \ref{thm:lowerbound}(i) and Theorem \ref{thm:totalcliques} this problem is solved by $\balpha^*=(\alpha^*,...,\alpha^*)$, $\bbeta^*=(\beta^*,...,\beta^*)$ where:
\begin{equation}\label{eq:optvalues}
    (\alpha^*,\beta^*) \equiv \begin{cases}
        (0, [-\frac{1}{d},...,-\frac{1}{d}]) & \text{if $k < \frac{2}{3-\tau}$},\\
        (\frac{1}{2}, [0,...,0]) & \text{if $k > \frac{2}{3-\tau}$}.
        \end{cases}
\end{equation}
The proof of this statement is provided in Section \ref{sec:maxsolution}.}

\subsection{Self-averaging random variable}

Now we introduce a lemma showing that the standard deviation of the number of cliques in the optimal set $M_{\varepsilon}^{(\balpha^*,\bbeta^*)}$ is significantly smaller than its mean. In particular, this condition will allow us to prove that the number of cliques in the optimal set converges to its mean value.

\begin{lemma}\label{lemma:selfaveraging}
$N(\mathcal{K}_k,M_{\varepsilon}^{(\balpha^*,\bbeta^*)})$ is a self-averaging random variable. That is,
{\change \begin{equation}
    \frac{\Var(N(\mathcal{K}_k,M_{\varepsilon}^{(\balpha^*,\bbeta^*)}))}{\E[N(\mathcal{K}_k,M_{\varepsilon}^{(\balpha^*,\bbeta^*)})]^2} \longrightarrow 0, \quad \text{as $n \to \infty$}.
\end{equation}}
\end{lemma}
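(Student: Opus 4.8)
The plan is to run a second-moment (self-averaging) argument, writing $N := N(\K,\M) = \sum_{\bv} X_{\bv}$ with $X_{\bv} := 1_{\{\GIRG|_{\bv} = \K,\ \bv \in \M\}}$, and controlling the variance through a covariance decomposition organised by the overlap size of two $k$-tuples. Concretely, I would write
\[
\Var(N) \;=\; \sum_{\bv,\bw} \Cov(X_{\bv},X_{\bw}) \;=\; \sum_{\ell=0}^{k}\ \sum_{|\bv \cap \bw| = \ell} \Cov(X_{\bv},X_{\bw}),
\]
and first observe that the $\ell = 0$ (disjoint) terms vanish: in the GIRG the weights, the positions, and, conditionally on these, the edge indicators are all independent across vertices, so $X_{\bv}$ and $X_{\bw}$ depend on disjoint families of random variables when $\bv \cap \bw = \emptyset$, making them independent. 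Since there are only finitely many overlap sizes $\ell \in \{1,\dots,k\}$, it then suffices to show that each term contributes $o(\E[N]^2)$.

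For $\ell \ge 1$ I would use the crude bound $\Cov(X_{\bv},X_{\bw}) \le \E[X_{\bv} X_{\bw}]$ and read $\sum_{|\bv\cap\bw|=\ell}\E[X_{\bv}X_{\bw}]$ as the expected number of pairs of $k$-cliques, both lying in $\M$, sharing exactly $\ell$ vertices. Such a configuration spans $2k-\ell$ distinct vertices, chosen in $\Theta(n^{2k-\ell})$ ways; conditionally on the weights and positions, the probability that all edges of both cliques are present factorises over the $2\binom{k}{2}-\binom{\ell}{2}$ edges of the union, and in the optimal regime each such edge probability is $\Theta(1)$. The estimate thus reduces to the probability that the $2k-\ell$ merged vertices satisfy the weight and distance constraints of $\M$, which is computed exactly as in the proof of Theorem~\ref{thm:lowerbound}(i), now on the glued vertex set. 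For the denominator I would use the lower bound $\E[N] = \Omega(n^{f(\balpha^*,\bbeta^*)})$ already established in Theorem~\ref{thm:lowerbound}(i).

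Carrying this out in the two regimes gives the result. In the non-geometric case ($\balpha^*=(\tfrac12,\dots)$, $\bbeta^*=0$) the distance constraints have probability $\Theta(1)$ and each weight constraint costs $\Theta(n^{-(\tau-1)/2})$, so the $\ell$-overlap contribution is $O\big(n^{2k-\ell}\cdot n^{-(2k-\ell)(\tau-1)/2}\big) = O\big(n^{(2k-\ell)(3-\tau)/2}\big)$, which divided by $\E[N]^2 = \Omega(n^{k(3-\tau)})$ is $O(n^{-\ell(3-\tau)/2}) \to 0$. In the geometric case ($\alpha^*=0$, $\beta^*=-1/d$) the crucial point is a \emph{merging phenomenon}: each vertex of a clique lies within distance $\Theta(n^{-1/d})$ of that clique's reference vertex, so a single shared vertex forces, via the triangle inequality, all $2k-\ell$ vertices into one common ball of radius $\Theta(n^{-1/d})$; fixing one vertex freely and constraining the remaining $2k-\ell-1$ costs $\Theta(n^{-(2k-\ell-1)})$, yielding a contribution $O\big(n^{2k-\ell}\cdot n^{-(2k-\ell-1)}\big) = O(n)$, which divided by $\E[N]^2 = \Omega(n^2)$ is $O(n^{-1}) \to 0$. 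Summing over the finitely many $\ell$ then yields $\Var(N)/\E[N]^2 \to 0$.

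I expect the main obstacle to be the careful treatment of the position constraints for overlapping cliques, in particular justifying the merging argument in the geometric regime: because distances in $\M$ are measured relative to each clique's first vertex, one must track how the two reference vertices relate depending on whether the overlap contains them, and verify that the weight and distance constraints inherited from the two cliques on the shared vertices are mutually compatible rather than doubly restrictive. A secondary technical point is ensuring the implied constants in the $\ell \ge 1$ bounds are uniform in $\varepsilon$ and $n$, so that each overlap contribution is genuinely $o(\E[N]^2)$ for every fixed $\varepsilon$.
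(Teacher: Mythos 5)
Your proposal is correct and follows essentially the same route as the paper's proof: decompose the variance by overlap size $s=|\bv\cap\bu|$, note disjoint tuples are independent, bound each covariance by the joint membership probability $\P(B_{\bv},B_{\bu})=\Theta\bigl(n^{(2k-s)(1-\tau)\alpha^*+(2k-s-1)d\beta^*}\bigr)$, multiply by the $\Theta(n^{2k-s})$ vertex choices, and compare against the $\Omega\bigl(n^{f(\balpha^*,\bbeta^*)}\bigr)$ lower bound on the mean from Theorem~\ref{thm:lowerbound}(i), yielding exactly the paper's exponents ($-s(3-\tau)/2$ in the non-geometric case, $-1$ in the geometric case). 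Your triangle-inequality merging argument in the geometric regime is in fact a careful justification of the step the paper handles with the brief ``without loss of generality $v_1=u_1$,'' so it strengthens rather than deviates from the paper's argument.
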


\begin{proof}
For any fixed combination of $k$ vertices $\bv$ we define the events
\begin{gather*}
    A_{\bv} = \text{$\GIRG|_{\bv}$ is a $k$-clique},\\
    B_{\bv} = \text{$\bv$ is contained in $M_{\varepsilon}^{(\balpha^*,\bbeta^*)}$},
\end{gather*}
so that $N(\mathcal{K}_k,M_{\varepsilon}^{(\balpha^*,\bbeta^*)}) = \sum_{\bv} 1_{\{A_{\bv},B_{\bv}\}}$. Then, we can rewrite the variance of this random variable as
\begin{equation*}
\begin{split}
    \Var\left( N(\mathcal{K}_k,M_{\varepsilon}^{(\balpha^*,\bbeta^*)}) \right) &= \Var\Big( \sum_{\bv} 1_{\{A_{\bv},B_{\bv}\}} \Big)\\
    &= \sum_{\bv,\bu} \Cov\left(1_{\{A_{\bv},B_{\bv}\}},1_{\{A_{\bu},B_{\bu}\}} \right) \\
    &= \sum_{\bv,\bu} \P(A_{\bv},B_{\bv},A_{\bu},B_{\bu}) - \P(A_{\bv},B_{\bv})\P(A_{\bu},B_{\bu}).
\end{split}
\end{equation*}
Observe that if $\bv \cap \bu = \varnothing$, then the covariance between $1_{\{A_{\bv},B_{\bv}\}}$ and $\newchange 1_{\{A_{\bu},B_{\bu}\}}$ is $0$. Therefore, we restrict to the case $|\bv \cap \bu| = s$, with $s\geq 1$. Without loss of generality, we can suppose that $v_1 = u_1$.

Furthermore, for all pairs $\bv, \bu$ we use the bound 
$$\P(A_{\bv},B_{\bv},A_{\bu},B_{\bu}) - \P(A_{\bv},B_{\bv})\P(A_{\bu},B_{\bu}) \leq \P(A_{\bv},B_{\bv},A_{\bu},B_{\bu}) \leq \P(B_{\bv},B_{\bu}).$$
Following the same computations as in Section 3.1, and assuming that $\bv$ and $\bu$ do intersect in $s$ elements, we obtain
\begin{align}
    \P(B_{\bv},B_{\bu}) &= \Theta(n^{k(1-\tau)\alpha^* + (k-1)d\beta^*}) \cdot \Theta(n^{(k-s)(1-\tau)\alpha^* + (k-s)d\beta^*}) \nonumber \\
    &= \Theta(n^{(2k-s)(1-\tau)\alpha^* + (2k-s-1)d\beta^*}).
\end{align}
Since there are in total $\binom{n}{2k-s} = \Theta(n^{2k-s})$ ways to choose the vertices composing $\bv$ and $\bu$, we obtain
\begin{align}
    \sum_{\bv,\bu : |\bv \cap \bu| = s} \Cov\left(1_{\{A_{\bv},B_{\bv}\}},1_{\{A_{\bu},B_{\bu}\}} \right) &\leq \Theta(n^{2k-s}) \cdot \Theta(n^{(2k-s)(1-\tau)\alpha^* + (2k-s-1)d\beta^*}) \nonumber \\
    &= \Theta(n^{(2k-s) + (2k-s)(1-\tau)\alpha^* + (2k-s-1)d\beta^*}),
\end{align}
and taking the sum over all possible values for $s$,
\begin{align}
    \Var\left( N(\mathcal{K}_k,M_{\varepsilon}^{(\balpha^*,\bbeta^*)}) \right) = O(n^{(2k-s) + (2k-s)(1-\tau)\alpha^* + (2k-s-1)d\beta^*}),
\end{align}
for any $s = 1,...,k$.
The lower bound for $\mathbb{E}[N(\K,\M)]$ follows from Theorem \ref{thm:lowerbound}(i). Therefore
\begin{align}
    \frac{\Var(N(\mathcal{K}_k,M_{\varepsilon}^{(\balpha^*,\bbeta^*)}))}{\E[N(\mathcal{K}_k,M_{\varepsilon}^{(\balpha^*,\bbeta^*)})]^2} &= \frac{O(n^{(2k-s) + (2k-s)(1-\tau)\alpha^* + (2k-s-1)d\beta^*})}{\Omega(n^{2k + 2k(1-\tau)\alpha^* + 2(k-1)d\beta^*})} \nonumber \\
    &= O(n^{-s -s(1-\tau)\alpha^* + (1-s)d\beta^*}).
\end{align}
Now, $g(s) = -s -s(1-\tau)\alpha^* + (1-s)d\beta^*$ is negative for all $s \geq 1$, as 
\begin{itemize}
    \item if $(\alpha^*,\beta^*) = (0,-1/d)$ then $g(s) = -s -(1-s) = -1$,
    \item if $(\alpha^*,\beta^*) = (1/2,0)$ then $g(s) = -s -s(1-\tau)/2 = -s(3-\tau)/2$.
\end{itemize}
This proves the claim in both cases.
\end{proof}

\subsection{Proof of Theorem \ref{thm:lowerbound}(ii)}, it From definition of $\balpha^*,\bbeta^*$ follows that 
\begin{equation}
f(\balpha^*,\bbeta^*) =  \begin{cases} 
\frac{3-\tau}{2}k & \text{if $k > \frac{2}{3-\tau}$} \\
1 & \text{if $k < \frac{2}{3-\tau}$} \end{cases}
\end{equation}
Then, applying Theorem \ref{thm:lowerbound}(i) yields
\begin{equation}
    \E[N(\mathcal{K}_k,M_{\varepsilon}^{(\balpha^*,\bbeta^*)})] = 
    \begin{cases}
        \Omega(n^{k(3-\tau)/2}) & \text{if $k > \frac{2}{3-\tau}$}\\
        \Omega(n) & \text{if $k < \frac{2}{3-\tau}$}
    \end{cases}
\end{equation}
For any random variable $X$ with positive mean, and for any $\delta >0$, the Chebyshev inequality states that $\P(|X - \E[X]| > \delta \E[X]) \leq \Var(X)/\delta^2 \E[X]^2$. Consequently, if $X_n$ is such that $\Var(X_n)/\E[X_n]^2 \to 0$ as $n \to \infty$, then for every $\delta > 0$,
\begin{equation}
    \lim_{n \to \infty} \P(|X_n/\E[X_n] - 1| > \delta) = 0.
\end{equation}
Therefore, from the Chebyshev inequality and Lemma \ref{lemma:selfaveraging} follows that $\forall \delta > 0$,
\begin{equation*}
    \lim_{n \to \infty} \P\left(\left| \frac{N(\K,\M)}{\E[N(\K,\M)]} - 1\right| > \delta \right) = 0.
\end{equation*}
Then the proof is concluded combining the latter and Theorem \ref{thm:lowerbound}(i). 
\qed

{\change

\section{Precise asymptotics and typical cliques}\label{sec:totalcliques}
We now prove Theorem~\ref{thm:totalcliques} which counts $N(\K)$, the total number of cliques in the GIRG. We rewrite the expected value of this random variable in an integral form, as follows:
\begin{equation}
\begin{aligned}
    \E[N(\K)] &= \sum_{\bv} \P(\GIRG|_{\bv} = \K)\\
    &= \binom{n}{k} \int_{w_0}^{\infty} d F(w_1) \cdots \int_{w_0}^{\infty} d F(w_k) \int_{[0,1]^d} dx_1 \cdots \int_{[0,1]^d} dx_k \prod_{i<j} p(w_i,w_j,x_i,x_j).
\end{aligned}
\end{equation}

We start with a lemma showing that the integrals $J^{\text{NG}}$ and $J^{\text{G}}$ defined in \eqref{eq:JNG}-\eqref{eq:JG} are finite:
\begin{lemma}\label{lemma:integralconvergence} \leavevmode
\begin{enumerate}[(i)]
    \item \label{lem:NGintegralconvergence} If $k > \frac{2}{3-\tau}$, then 
    \begin{equation*} \label{eq:NGintegral}
        J^{\text{NG}} = \int_{0}^{\infty} dy_1 \cdots \int_{0}^{\infty} dy_k \int_{[0,1]^d} dx_1 \cdots \int_{[0,1]^d} dx_k \; (y_1 \cdots y_k)^{-\tau} \prod_{i < j} \left[1 \wedge \left(\frac{y_iy_j}{||x_i - x_j||^d}\right)^{\gamma}\right] < \infty.
    \end{equation*}
    \item \label{lem:Gintegralconvergence} If $k < \frac{2}{3-\tau}$, then
    \begin{equation*}
        J^{\text{G}} = \int_{w_0}^{\infty} dw_1 \cdots \int_{w_0}^{\infty} dw_k \int_{\mathbb{R}^d} dz_2 \cdots \int_{\mathbb{R}^d} dz_k \; (w_1 \cdots w_k)^{-\tau} \prod_{i < j} \left[1 \wedge \left(\frac{w_1 w_j}{||z_i - z_j||^d} \right)^{\gamma} \right] < \infty.
    \end{equation*}
\end{enumerate}
\end{lemma}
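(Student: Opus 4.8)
The plan is to prove both claims with a single device that exploits two features of the integrands in \eqref{eq:JNG}--\eqref{eq:JG}: the domains are compact or translation invariant, so the only obstruction to finiteness sits at one boundary of the weight variables, and the product over the $\binom{k}{2}$ edge factors can be truncated to a \emph{spanning tree}. Since every factor $\min\{1,(\cdot)^{\gamma}\}$ is at most $1$, for any spanning tree $T$ on the $k$ vertices one may discard the non-tree factors to obtain $\prod_{i<j}\min\{1,(\cdot)^{\gamma}\}\le\prod_{(i,j)\in T}\min\{1,(\cdot)^{\gamma}\}$. The payoff is that a tree can be integrated one leaf at a time, using that the single-position building block
\[
\int \min\Big\{1,\Big(\tfrac{a}{||u||^{d}}\Big)^{\gamma}\Big\}\,du ,
\]
taken over $\mathbb{T}^{d}$ (for $J^{\text{NG}}$) or over $\mathbb{R}^{d}$ (for $J^{\text{G}}$), can be computed directly: on the ball $\{||u||\le a^{1/d}\}$ the integrand equals $1$, while on the complement it decays like $a^{\gamma}||u||^{-d\gamma}$ and is integrable \emph{precisely because} $\gamma>1$. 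This yields the value $C\,a$ on $\mathbb{R}^{d}$ and the bound $C\min\{1,a\}$ on $\mathbb{T}^{d}$, with $C=C(d,\gamma)<\infty$. This is the step from which the insensitivity to $\gamma$ originates: each edge contributes essentially one power of each of its two weights, independently of $\gamma$.

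By translation invariance of $\mathbb{R}^{d}$ and of the torus, integrating out a leaf $z_\ell$ with tree-parent $z_p$ returns $C\,w_\ell w_p$ (respectively a quantity $\le C\min\{1,y_\ell y_p\}$), a constant not depending on $z_p$; hence the leaf fully decouples and one may peel the tree from its leaves inward. This gives, for \emph{every} fixed tree $T$, the factorized bound
\[
\int_{(\mathbb{R}^{d})^{k-1}}\prod_{i<j}\min\Big\{1,\Big(\tfrac{w_iw_j}{||z_i-z_j||^{d}}\Big)^{\gamma}\Big\}\,dz \;\le\; C^{k-1}\prod_{(i,j)\in T}w_iw_j ,
\]
and analogously $\int_{\mathbb{T}^{dk}}\prod_{i<j}\min\{1,(y_iy_j/||x_i-x_j||^{d})^{\gamma}\}\,dx\le C^{k-1}\prod_{(i,j)\in T}\min\{1,y_iy_j\}$. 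Because these hold for every $T$, I am free to choose, for each weight configuration, the tree minimizing the right-hand side, namely the star centred at the smallest-weight vertex. After relabelling so that $w_1\le\cdots\le w_k$ (respectively $y_1\le\cdots\le y_k$) and paying a factor $k!$, this is the star $\{(1,j):j\ge2\}$, and the product reduces to $w_1^{\,k-1}\prod_{j\ge2}w_j$ (respectively $\prod_{j\ge2}\min\{1,y_1y_j\}\le y_1^{\,k-1}\prod_{j\ge2}y_j$).

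It then remains to check an explicit iterated one-dimensional integral. For $J^{\text{G}}$ the weights range over $(w_0,\infty)$ and the only danger is $w\to\infty$: integrating $w_k,\dots,w_2$ successively (each with exponent $1-\tau<-1$, convergent at $+\infty$ since $\tau>2$) telescopes to a power $w_1^{(k-1)(2-\tau)}$, so the final integral $\int_{w_0}^{\infty}w_1^{(k-1)(3-\tau)-\tau}\,dw_1$ converges at $+\infty$ iff $(k-1)(3-\tau)<\tau-1$, i.e.\ iff $k<\frac{2}{3-\tau}$. For $J^{\text{NG}}$ the weights range over $(0,\infty)$; using $\min\{1,y_1y_j\}\le y_1y_j$ the same telescoping integrals still converge at $+\infty$ (again as $\tau>2$), and the sole constraint comes from $y_1\to0$, where the accumulated exponent $y_1^{(k-1)(3-\tau)-\tau}$ is integrable near $0$ iff $(k-1)(3-\tau)>\tau-1$, i.e.\ iff $k>\frac{2}{3-\tau}$. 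Thus the two regimes are genuinely dual and the thresholds match the statement.

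The main obstacle is exactly the coupling of the $\binom{k}{2}$ position-dependent factors through the shared coordinates $x_1,\dots,x_k$, which forbids a naive factorization of the spatial integral; the spanning-tree truncation combined with the translation invariance of the ground space is what breaks this coupling cleanly. A secondary but essential point is that an \emph{adaptive} choice of tree (the star centred at the current minimum weight) is required: a fixed star centred at one prescribed vertex only yields the strictly weaker condition $k<\tau$, whereas the minimum-weight centring is what sharpens this to the correct threshold $\frac{2}{3-\tau}$.
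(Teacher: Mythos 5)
Your part (ii) is correct and is essentially the paper's own argument: the paper likewise truncates the clique to the star centred at the minimum-weight vertex (paying a factor $k$ after a change of variables, rather than your $k!$), computes the same building block $\int_{[0,\infty)^d}\min\{1,(w_1w_j/\|z\|^d)^\gamma\}\,dz \le \bigl(1+(\tfrac{\gamma}{\gamma-1})^d\bigr)\,w_1w_j$ via the cube/complement split, and telescopes the weight integrals down to $\int_{w_0}^\infty w_1^{k(3-\tau)-3}\,dw_1$, finite iff $k<\frac{2}{3-\tau}$; your exponent $(k-1)(3-\tau)-\tau$ equals $k(3-\tau)-3$, so the thresholds agree.

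Part (i), however, has a genuine gap. After the spatial integration you correctly retain the factors $\min\{1,y_1y_j\}$, but you then discard the minimum, bounding $\min\{1,y_1y_j\}\le y_1y_j$ for all $j$, and assert that ``the sole constraint comes from $y_1\to 0$.'' That is false: the accumulated outer integrand is the pure power $y_1^{(k-1)(3-\tau)-\tau}=y_1^{k(3-\tau)-3}$, which is never integrable over all of $(0,\infty)$; precisely in the regime $k>\frac{2}{3-\tau}$ its exponent exceeds $-1$, so $\int_1^\infty y_1^{k(3-\tau)-3}\,dy_1=\infty$. (The inner $y_j$-integrals do converge at $+\infty$ since $1-\tau<-1$, but this does not rescue the outer $y_1$-integral, so your NG bound as written is divergent.) The repair is exactly the split the paper makes: write the integral as $I_1+I_2$ according to $y_1\le 1$ or $y_1\ge 1$; on $I_1$ use $\min\{1,y_1y_j\}\le y_1y_j$, which yields integrability near $0$ iff $k>\frac{2}{3-\tau}$, and on $I_2$ use $\min\{1,y_1y_j\}\le 1$, which yields the outer exponent $-\tau+(k-1)(1-\tau)=-1-k(\tau-1)<-1$ and hence convergence at $+\infty$. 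Since you already carry the bound $C\min\{1,y_1y_j\}$ from the torus integration, this is a one-line fix, but it is a necessary one. Apart from this, your spanning-tree framing is a mild generalization of the paper's star truncation (the tree you actually use is that star), and your direct computation of the marginal replaces the paper's citation of Bringmann et al.\ for $\E_{x_j}[p_{ij}\mid x_i]=\Theta(\min\{1,w_iw_j/W\})$, which is a reasonable, more self-contained substitute.
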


\begin{proof}[Proof of Lemma \ref{lemma:integralconvergence}\ref{lem:NGintegralconvergence}]
The product in the integrand of $J^{\text{NG}}$ represents the probability that vertices $(v_1,...,v_k)$ form a clique. Clearly, we can bound this quantity with the probability that a star centered in the lowest weighted vertex is present. Without loss of generality, and introducing a factor $k$, we may suppose that $v_1$ is the lowest weighted vertex, so that
\begin{equation}
    J^{\text{NG}} \leq k \int_{0}^{\infty} dy_1 \int_{y_1}^{\infty} dy_2 \cdots \int_{y_1}^{\infty} dy_k \int_{[0,1]^d} dx_1 \cdots \int_{[0,1]^d} dx_k \; (y_1 \cdots y_k)^{-\tau} \prod_{j} \left[1 \wedge \left(\frac{y_1y_j}{||x_1 - x_j||^d}\right)^{\gamma}\right]. \nonumber
\end{equation}
From Bringmann et al. \cite{bringmann2016} we know that the marginal probability  $\E_{x_j}[p_{ij} | x_i] = \Theta\left( \min\{1, \frac{w_iw_j}{W}\} \right)$ for all $i,j$. Then,
\begin{equation}
    J^{\text{NG}} \leq k \int_{0}^{\infty} dy_1 \int_{y_1}^{\infty} dy_2 \cdots \int_{y_1}^{\infty} dy_k \; (y_1 \cdots y_k)^{-\tau} \prod_{1 < j \leq k}\Theta\left(1 \wedge y_1y_j\right). \nonumber
\end{equation}
We can split the latter integral as $I_1 + I_2$, where
\begin{gather}
    I_1 = \int_{0}^{1}  \int_{y_1}^{\infty} \cdots \int_{y_1}^{\infty} (y_1 \cdots y_k)^{-\tau} \prod_{1 < j \leq k} \Theta\left( 1 \wedge y_1y_j \right)  dy_1 \cdots dy_k, \\
    I_2 = \int_{1}^{\infty}  \int_{y_1}^{\infty} \cdots \int_{y_1}^{\infty} (y_1 \cdots y_k)^{-\tau} \prod_{1 < j \leq k} \Theta\left( 1 \wedge y_1y_j \right)  dy_1 \cdots dy_k.
\end{gather}
We first show that $I_1 < \infty$. We can bound $\min\{1, y_1y_j\} \leq y_1y_j$ for all $j$. Then
\begin{align*}
    I_1 &\leq \int_{0}^{1}  \int_{y_1}^{\infty} \cdots \int_{y_1}^{\infty} (y_1 \cdots y_k)^{-\tau} \prod_{1 < j \leq k} \Theta\left( y_1y_j \right) dy_1 \cdots dy_k \\
    &= \int_{0}^{1}  \int_{y_1}^{\infty} \cdots \int_{y_1}^{\infty} \Theta(y_1^{-\tau + k-1}) \cdot \Theta(y_2^{-\tau + 1}) \cdots \Theta(y_k^{-\tau + 1}) \; dy_1 \cdots dy_k \\
    &=  \int_{0}^{1} \Theta(y_1^{-\tau + k-1}) \cdot  \left(\frac{\Theta(y_1^{-\tau + 2})}{\tau - 2}\right)^{k-1}\; dy_1 \\
    %&= \frac{1}{(\tau -2)^{k-1}} \int_{0}^{1} \Theta(y_1^{-\tau + k-1 + (2-\tau)(k-1)}) dy_1 \\
    &= \frac{1}{(\tau -2)^{k-1}} \int_{0}^{1} \Theta(y_1^{k(3-\tau) - 3}) dy_1.
\end{align*}
At this point observe that the latter integral over $y_1$ is finite if and only if $k(3-\tau) - 3 > - 1$, that is, if and only if $k > \frac{2}{3 - \tau}$. In this case,
\begin{align}
    I_1 \leq \frac{(\tau -2)^{1-k}}{k(3-\tau) -2}\Theta(1) < \infty.
\end{align}
Now we show that $I_2 < \infty$. We bound $\min\{1, y_1y_j\} \leq 1$. Then,
\begin{align*}
    I_2 &\leq \Theta(1) \int_{1}^{\infty} \int_{y_1}^{\infty} \cdots \int_{y_1}^{\infty} (y_1 \cdots y_k)^{-\tau} dy_1 \cdots dy_k \\
    &= \Theta(1) \int_{1}^{\infty} y_1^{-\tau} \left(\frac{y_1^{-\tau +1}}{\tau - 1}\right)^{k-1} dy_1 \\
    &= \Theta(1) \int_{1}^{\infty} y_1^{-1 - k(\tau-1)} dy_1.
\end{align*}
Now observe that $\newchange -1 - k(\tau-1) < -1$. Therefore, the latter integral is finite, and in particular
\begin{align}
    I_2 \leq \Theta(1) \frac{1}{k(\tau -1)} < \infty.
\end{align}
Summing up, when $k > \frac{2}{3-\tau}$, $J^{\text{NG}} \leq k(I_1 + I_2) < \infty$.
\end{proof}

\begin{proof}[Proof of Lemma \ref{lemma:integralconvergence}\ref{lem:Gintegralconvergence}]
We split the proof of the Lemma into three parts
\begin{enumerate}
    \item Bounding $J^{\text{G}}$ using the star instead of the clique.
    \item Bounding the integral over position variables.
    \item Bounding the integral over weight variables.
\end{enumerate}

\textbf{Part 1}

As in Lemma \ref{lemma:integralconvergence}\ref{lem:NGintegralconvergence}, we can bound $J^{\text{G}}$ using the star centered in the vertex with lowest weight instead of the $k$-clique. We distinguish two cases:
\begin{itemize}
    \item if $v_1$ is the vertex with lowest weight then the integral is bounded by
    \begin{equation}\label{eq:w1min}
        J^{\text{G}} \leq \int_{w_0}^{\infty} dw_1 \int_{w_1}^{\infty} dw_2 \cdots \int_{w_1}^{\infty} dw_k \int_{\mathbb{R}^d} dz_2 {\newchange \cdots} \int_{\mathbb{R}^d} dz_k \; (w_1 \cdots w_k)^{-\tau} \cdot \prod_{j \neq 1} \left[1 \wedge \left(\frac{w_1 w_j}{||z_j||^d} \right)^{\gamma} \right],
    \end{equation}
    \item if the vertex with lowest weight is $v_i$ with $i \neq 1$, then the integral is bounded by
        \begin{equation}\label{eq:wimin}
        J^{\text{G}} \leq \int_{w_i}^{\infty} dw_1 \cdots \int_{w_0}^{\infty} dw_i \cdots \int_{w_i}^{\infty} dw_k \int_{\mathbb{R}^d} dz_2 {\newchange \cdots} \int_{\mathbb{R}^d} dz_k \; (w_1 \cdots w_k)^{-\tau} \cdot \prod_{j \neq i} \left[ 1 \wedge \left(\frac{w_i w_j}{||z_j - z_i||^d} \right)^{\gamma} \right].
    \end{equation}
    In this case, we can define new variables 
    \begin{equation*}
        \widetilde{z}_j =
        \begin{cases}
            z_i & \text{if $j = i$}\\
            z_j - z_i & \text{otherwise}
        \end{cases}
        \qquad\qquad
        \widetilde{w}_j =
        \begin{cases}
            w_i & \text{if $j = 1$}\\
            w_1 & \text{if $j = i$}\\
            w_j & \text{otherwise}
        \end{cases}
    \end{equation*}
    and substituting $z \to \widetilde{z}$, $w \to \widetilde{w}$, \eqref{eq:wimin} becomes
    \begin{equation}
        \int_{w_0}^{\infty} d\widetilde{w}_1 \int_{\widetilde{w}_1}^{\infty} d\widetilde{w}_2 \cdots \int_{\widetilde{w}_1}^{\infty} d\widetilde{w}_k \int_{\mathbb{R}^d} d\widetilde{z}_2 {\newchange \cdots} \int_{\mathbb{R}^d} d\widetilde{z}_k \; (\widetilde{w}_1 \cdots \widetilde{w}_k)^{-\tau} \cdot \prod_{j \neq 1} \left[1 \wedge \left(\frac{\widetilde{w}_1 \widetilde{w}_j}{||\widetilde{z}_j||^d} \right)^{\gamma} \right],
    \end{equation}
    which is identical to \eqref{eq:w1min}.
\end{itemize}
Therefore,
\begin{equation}
    J^{\text{G}} \leq  k\int_{w_0}^{\infty} dw_1 \int_{w_1}^{\infty} dw_2 \cdots \int_{w_1}^{\infty} dw_k \int_{\mathbb{R}^d} dz_2 {\newchange \cdots} \int_{\mathbb{R}^d} dz_k \; (w_1 \cdots w_k)^{-\tau} \cdot \prod_{j \neq 1} \left[1 \wedge \left(\frac{w_1 w_j}{||z_j||^d} \right)^{\gamma} \right],
\end{equation}
where the factor $k$ appears because there are $k$ different ways to choose the index with lowest weight.
Next, observe that by symmetry of the norm
\begin{equation}
    J^{\text{G}} \leq k2^{\newchange d(k-1)} \int_{w_0}^{\infty} dw_1 \cdots \int_{w_{1}}^{\infty} dw_k \int_{[0,\infty)^d} dz_2 {\newchange \cdots} \int_{[0,\infty)^d} dz_k \; (w_1 \cdots w_k)^{-\tau} \cdot \prod_{1 < j \leq k} \left[1 \wedge \left(\frac{w_1 w_j}{||z_j||^d} \right)^{\gamma} \right],
\end{equation}

\textbf{Part 2}

We solve each integral over the variables $z_2,...,z_k$ separately. For $i \geq 2$ fixed these integrals equal 
$$I := \int_{[0,\infty)^d} \left[1 \wedge \left(\frac{w_1 w_i}{||z_i||^d}  \right)^{\gamma} \right] dz_i = \int_{[0,\infty)^d} \left[ 1 \wedge \left(\frac{w_1 w_i}{\max_{j}(z_i^{(j)})^d}  \right)^{\gamma} \right] dz_i^{(1)} \cdots dz_i^{(d)}.$$ 
Observe that $\frac{w_1w_i}{\max_j(z_i^{(j)})^d} > 1$  if and only if $z_i^{(j)} < (w_1w_i)^{1/d}$ for all $j=1,..,d$. 

Thus, we can define the cube $C := \{z \in [0,\infty)^d: z^{(j)} < (w_1w_i)^{1/d} \; \forall j \in [d]\}$, and separate the domain into two different regions $[0,\infty)^d = C \cup \overline{C}$. Then,
\begin{equation}
    I = \int_C 1 \; dz_i^{(1)} \cdots dz_i^{(d)} + \int_{\overline{C}} \left(\frac{w_1w_i}{\max_j(z_i^{(j)})^d}\right)^{\gamma} dz_i^{(1)} \cdots dz_i^{(d)} =: I_1 + I_2.
\end{equation}
The integral $I_1$ is the volume of the cube $C$, that is, $I_1 = w_1w_i$. To solve $I_2$, we observe that inside $\overline{C}$
\begin{equation}
    \max_j(z_i^{(j)}) \geq \max\{z_i^{(j)},(w_1w_i)^{1/d}\}, \quad \forall j \in[d],
\end{equation}
as in $\overline{C}$ at least one of the components needs to be greater then $(w_1w_i)^{1/d}$. Therefore, inside $\overline{C}$
\begin{equation}
    \max_j(z_i^{(j)})^d \geq \prod_{j \in [d]} \max\{z_i^{(j)},(w_1w_i)^{1/d}\}.
\end{equation}
and the integral $I_2$ is bounded by
\begin{equation}
    I_2 \leq \int_{\overline{C}} \left( \frac{w_1w_i}{\prod_{j} \max\{z_i^{(j)},(w_1w_i)^{1/d}\}}\right)^{\gamma} dz_i^{(1)} \cdots dz_i^{(d)} =: I_3.
\end{equation}
We can further bound $I_3$ by recalling that the integration domain $\overline{C} \subset [0,\infty)^d$, and by solving separately the integrals along each direction. We have
\begin{align}
    I_3 &\leq \int_{[0,\infty)^d} \left( \frac{w_1w_i}{\prod_{j} \max\{z_i^{(j)},(w_1w_i)^{1/d}\}}\right)^{\gamma} dz_i^{(1)} \cdots dz_i^{(d)} \nonumber\\
    &= (w_1w_i)^{\gamma} \left(\int_{[0,\infty)} \left( \frac{1}{\max\{\xi,(w_1w_i)^{1/d}\}}\right)^{\gamma} d\xi \right)^d, \label{eq:intI3}
\end{align}
and where
\begin{align}
    \int_{[0,\infty)} \left( \frac{1}{\max\{\xi,(w_1w_i)^{1/d}\}}\right)^{\gamma} d\xi &= \int_{0}^{(w_1w_i)^{1/d}} (w_1w_i)^{-\frac{1}{d}\gamma} d\xi + \int_{(w_1w_i)^{1/d}}^{\infty} \xi^{-\gamma} d\xi \nonumber  \\
    &= (w_1w_i)^{\frac{1}{d}(1-\gamma))} + \left[ \frac{\xi^{1-\gamma}}{1-\gamma} \right]_{(w_1w_i)^{1/d}}^{\infty} \nonumber \\
    %&= (w_1w_i)^{\frac{1}{d}(1-\gamma)} + \frac{(w_1w_i)^{\frac{1}{d}(1-\gamma)}}{\gamma-1} \\
    &= \frac{\gamma}{\gamma-1} (w_1w_i)^{\frac{1}{d}(1-\gamma)}. \label{eq:intxi}
\end{align}
Finally, plugging \eqref{eq:intxi} into \eqref{eq:intI3} we obtain
\begin{align}
    I_3 &\leq (w_1w_i)^{\gamma} \left( \frac{\gamma}{\gamma-1} (w_1w_i)^{\frac{1}{d}(1-\gamma)} \right)^d = \left(\frac{\gamma}{\gamma-1}\right)^d w_1w_i,
\end{align}
and
\begin{equation}
    I = I_1 + I_2 \leq I_1 + I_3 \leq w_1w_i +  \left(\frac{\gamma}{\gamma-1}\right)^d w_1w_i =  a w_1w_i.
\end{equation}
with $a := \left[1 + \left(\frac{\gamma}{\gamma-1}\right)^d\right]$. Using the same computations for every $i \geq 2$, we are therefore able to bound $J^{\text{G}}$ with 
\begin{align}
    J^{\text{G}} &\leq k 2^{\newchange d(k-1)} \int_{w_0}^{\infty} dw_1 \int_{w_{1}}^{\infty} dw_2 \cdots \int_{w_{1}}^{\infty} dw_k  (w_1 \cdots w_k)^{-\tau} \prod_{j=2}^k(a w_1w_j) \nonumber\\
    &= k 2^{\newchange d(k-1)} a^{k-1} \int_{w_0}^{\infty} dw_1 \int_{w_{1}}^{\infty} dw_2 \cdots \int_{w_{1}}^{\infty} dw_k  \; w_1^{-\tau + k-1} (w_2 \cdots w_k)^{-\tau+1}. \label{eq:intoverweights}
\end{align}

\textbf{Part 3}

Finally, we solve the integral in \eqref{eq:intoverweights}
\begin{align}
    \int_{w_0}^{\infty} w_1^{-\tau + k-1} \; dw_1 \int_{w_{1}}^{\infty} w_2^{-\tau+1} \; dw_2 \cdots \int_{w_{1}}^{\infty} w_k^{-\tau+1} \; dw_k 
    &= \int_{w_0}^{\infty} w_1^{-\tau + k-1} \left(\int_{w_{1}}^{\infty} \omega^{-\tau+1} \; d\omega\right)^{k-1} \; dw_1 \nonumber \\
    &= \int_{w_0}^{\infty} w_1^{-\tau + k-1} \left(\frac{w_1^{2-\tau}}{\tau-2}\right)^{k-1} \; dw_1 \nonumber \\
    %&= (\tau-2)^{1-k} \int_{w_0}^{\infty} w_1^{-\tau + k-1+(2-\tau)(k-1)} \; dw_1 \\
    &= (\tau-2)^{1-k} \int_{w_0}^{\infty} w_1^{k(3-\tau)-3} \; dw_1. \label{eq:finalint}
\end{align}
Since by hypothesis $k < \frac{2}{3-\tau}$, the exponent in \eqref{eq:finalint} is $k(3-\tau)-3 < -1$. This implies that the latter integral is finite, proving our claim.
\end{proof}

\subsection{Proof of Theorem \ref{thm:totalcliques}\ref{thm:NGtotalcliques}}

Observe that, for every $\varepsilon > 0$, we can rewrite $N(\mathcal{K}_k) = N(\mathcal{K}_k, W^{\text{NG}}(\varepsilon)) + N(\mathcal{K}_k, \overline{W}^{\text{NG}}(\varepsilon))$,
where $\overline{W}^{\text{NG}}(\varepsilon) := \binom{V}{k} \setminus W^{\text{NG}}(\varepsilon)$. We write the mean value of $N(\mathcal{K}_k, W^{\text{NG}}(\varepsilon))$ in integral form
\begin{align}
    \E[N(\mathcal{K}_k, W^{\text{NG}}(\varepsilon))] &= \sum_{\bv} \P(\GIRG|_{\bv} = \mathcal{K}_k, \bv \in W^{\text{NG}}(\varepsilon)) \nonumber \\
    &= \binom{n}{k} \int_{\varepsilon\sqrt{\mu n}}^{\sqrt{\mu n}/\varepsilon} w_1^{-\tau} dw_1 \cdots \int_{\varepsilon\sqrt{\mu n}}^{\sqrt{\mu n}/\varepsilon} w_k^{-\tau} dw_k \int_{[0,1]^d} dx_1 \cdots \int_{[0,1]^d} dx_k \prod_{i < j } p_{ij} \nonumber \\
    &= (1 +o(1)) \frac{n^k}{k!} \int_{\varepsilon\sqrt{\mu n}}^{\sqrt{\mu n}/\varepsilon}dw_1 \cdots \int_{\varepsilon\sqrt{\mu n}}^{\sqrt{\mu n}/\varepsilon}dw_k  \; (w_1 \cdots w_k)^{-\tau} \nonumber \\ &\hspace{2.5cm} \int_{[0,1]^d}dx_1 \cdots \int_{[0,1]^d} dx_k \prod_{i < j} \min\left\{1, \left(\frac{w_iw_j}{\mu n||x_i - x_j||^d}\right)^{\gamma}\right\}. \label{eq:intmeanWNG}
\end{align}
If we substitute $w_i = y_i \sqrt{\mu n}$ for all $i = 1,..,k$, the integral in \eqref{eq:intmeanWNG} becomes
\begin{multline}
    \int_{\varepsilon}^{1/\varepsilon} \sqrt{\mu n} \; dy_1 \cdots \int_{\varepsilon}^{1/\varepsilon} \sqrt{\mu n} \; dy_k \int_{[0,1]^d} dx_1 \cdots \int_{[0,1]^d} dx_k \; (\mu n)^{-k\tau/2} (y_1 \cdots y_k)^{-\tau} \\ \times \prod_{1 < j \leq k} \min\left\{1, \left(\frac{y_iy_j}{||x_i - x_j||^d}\right)^{\gamma}\right\}. \nonumber
\end{multline}
Then,
\begin{align}
    \E[N(\mathcal{K}_k,W^{\text{NG}}(\varepsilon))] &= (1+o(1)) \frac{n^k (\mu n)^{k/2} (\mu n)^{-k \tau/2}}{k!} \int_{\varepsilon}^{1/\varepsilon} \cdots \int_{\varepsilon}^{1/\varepsilon} \int_{[0,1]^d} \cdots \int_{[0,1]^d}  \nonumber \\ & \hspace{1.5cm} \times (y_1 \cdots y_k)^{-\tau} \prod_{1 < j \leq k} \min\left\{1, \left(\frac{y_iy_j}{||x_i - x_j||^d}\right)^{\gamma}\right\}  dy_1 \cdots dy_k dx_1 \cdots dx_k \nonumber \\
    &= (1+o(1)) \frac{n^{(3 - \tau)k/2}}{\mu^{(\tau-1)k/2}k!} \int_{\varepsilon}^{1/\varepsilon} \cdots \int_{\varepsilon}^{1/\varepsilon} \int_{[0,1]^d} \cdots \int_{[0,1]^d} \nonumber \\ & \hspace{1.5cm} \times (y_1 \cdots y_k)^{-\tau} \prod_{1 < j \leq k} \min\left\{1, \left(\frac{y_iy_j}{||x_i - x_j||^d}\right)^{\gamma}\right\}  dy_1 \cdots dy_k dx_1 \cdots dx_k \nonumber \\
    &{\newchange =:} (1+o(1)) \frac{n^{(3 - \tau)k/2}}{\mu^{(\tau-1)k/2}k!} J^{\text{NG}}(\varepsilon).
\end{align}
Similarly, using again the substitution $w_i = y_i \sqrt{\mu n}$ for all $i = 1,..,k$, we compute
\begin{align}
    \E[N(\mathcal{K}_k,\overline{W}^{\text{NG}}(\varepsilon))] &= (1 + o(1)) \frac{n^{(3-\tau)k/2}}{\mu^{(\tau-1)k/2}k!} \iint_{\overline{[\varepsilon,1/\varepsilon]^k}} dy_1 \cdots dy_k  \int_{[0,1]^d} dx_1 \cdots \int_{[0,1]^d} dx_k \;  \nonumber \\ & \hspace{4.5cm} \times (y_1 \cdots y_k)^{-\tau} \prod_{1 < j \leq k} \min\left\{1, \left(\frac{y_iy_j}{||x_i - x_j||^d}\right)^{\gamma}\right\} \nonumber \\
    &=: (1 + o(1)) \frac{n^{(3 - \tau)k/2}}{\mu^{(\tau-1)k/2}k!} R^{\text{NG}}(\varepsilon),
\end{align}
where $\overline{[\varepsilon,1/\varepsilon]^k} = (\mathbb{R}_+)^k \setminus [\varepsilon,1/\varepsilon]^k$.

At this point, observe that $[\varepsilon,1/\varepsilon] \to [0,\infty)$ as $\varepsilon \to 0$, hence 
\begin{equation*}
    J^{\text{NG}}(\varepsilon) \longrightarrow J^{\text{NG}} \quad \text{as } \varepsilon \to 0
\end{equation*}
increasingly (because the integrand in $J^{\text{NG}}(\varepsilon)$ is positive) and $J^{\text{NG}} < \infty$ by Lemma \ref{lemma:integralconvergence}\ref{lem:NGintegralconvergence} combined with the hypothesis $k > \frac{2}{3-\tau}$. In particular, since $J^{\text{NG}} = J^{\text{NG}}(\varepsilon) + R^{\text{NG}}(\varepsilon)$, this also implies that \begin{equation*}\label{eq:barepssmallng}
    R^{\text{NG}}(\varepsilon) \longrightarrow 0 \quad \text{as } \varepsilon \to 0.
\end{equation*}
Then, in particular $\E[N(\mathcal{K}_k,\overline{W}^{\text{NG}}(\varepsilon))] = O(n^{(3 - \tau)k/2}) R^{\text{NG}}(\varepsilon)$, and by the Markov inequality
\begin{equation}
    N(\mathcal{K}_k,\overline{W}^{\text{NG}}(\varepsilon)) = O_{\P}(n^{(3 - \tau)k/2}) R^{\text{NG}}(\varepsilon).
\end{equation}
Moreover, following the proof of Lemma \ref{lemma:selfaveraging}, also  $N(\mathcal{K}_k,W^{\text{NG}}(\varepsilon))$ is a self-averaging random variable, and from Chebyshev inequality we have
\begin{equation}
    N(\mathcal{K}_k,W^{\text{NG}}(\varepsilon)) = \E[N(\mathcal{K}_k,W^{\text{NG}}(\varepsilon))](1 + o_{\P}(1)) = \frac{n^{(3 - \tau)k/2}}{\mu^{(\tau-1)k/2}k!} J^{\text{NG}}(\varepsilon) (1 + o_{\P}(1)).
\end{equation}
Summing up, taking $\varepsilon \to 0$, we conclude that
\begin{equation}
    N(\mathcal{K}_k) = \frac{n^{(3 - \tau)k/2}}{\mu^{(\tau-1)k/2}k!} J^{\text{NG}} (1 + o_{\P}(1)),
\end{equation}
which proves the claim.
\qed

\subsection{Proof of Theorem \ref{thm:totalcliques}\ref{thm:Gtotalcliques}}

For every $\varepsilon > 0$, we can write $$N(\mathcal{K}_k) = N(\mathcal{K}_k, W^{\text{G}}(\varepsilon)) + N(\mathcal{K}_k, \overline{W}^{\text{G}}(\varepsilon)),$$
where $\overline{W}^{\text{G}}(\varepsilon) := \binom{V}{k} \setminus W^{\text{G}}(\varepsilon)$. Observe that we can rewrite the mean value of $N(\mathcal{K}_k, W^{\text{G}}(\varepsilon))$ as an integral
\begin{equation}
\begin{aligned}
    \E[N(\mathcal{K}_k, W^{\text{G}}(\varepsilon))] &= \sum_{\bv} \P(\GIRG|_{\bv} = \mathcal{K}_k, \bv \in W^{\text{G}}(\varepsilon)) \\
    &= \binom{n}{k} \int_{w_0}^{\infty} w_1^{-\tau} dw_1 \cdots \int_{w_0}^{\infty} w_k^{-\tau} dw_k \int_{[0,1]^d} dx_1 \int_{D} dx_2 \cdots \int_{D} dx_k \prod_{i < j} p_{ij}, \label{eq:geomeanint}
\end{aligned}
\end{equation}
where $D := \{y: |y - x_1| \in [\varepsilon, 1/\varepsilon](\mu n)^{-1/d}\}$. 
%Observe that we can rewrite $D = D_1 \setminus D_2$, where $$D_1 = [x_1^{(1)} - (\mu n)^{-1/d}/\varepsilon, x_1^{(1)} + (\mu n)^{-1/d}/\varepsilon] \times ... \times [x_1^{(d)} - (\mu n)^{-1/d}/\varepsilon, x_1^{(d)} + (\mu n)^{-1/d}/\varepsilon] $$ $$D_2 = [x_1^{(1)} - \varepsilon(\mu n)^{-1/d}, x_1^{(1)} + \varepsilon(\mu n)^{-1/d}] \times ... \times [x_1^{(d)} - \varepsilon(\mu n)^{-1/d}, x_1^{(d)} + \varepsilon(\mu n)^{-1/d}]$$
Substituting $x_i^{(j)}$ by $\delta_i^{(j)} := x_i^{(j)} - x_1^{(j)}$ for all $i  \geq 2$, $j \in [d]$, the integral in \eqref{eq:geomeanint} becomes
\begin{align}
    \int_{[w_0,\infty)} w_1^{-\tau} dw_1 \cdots \int_{[w_0,\infty)} w_k^{-\tau} dw_k \int_{\widetilde{D}} d\delta_2 \cdots \int_{\widetilde{D}} d\delta_k \prod_{i < j} \min\left\{ 1, \left(\frac{w_i w_j}{\mu n||\delta_i - \delta_j||^d} \right)^{\gamma} \right\}, \nonumber
\end{align}
where $\widetilde{D} = \widetilde{D}_1 \setminus \widetilde{D}_2$ with $\widetilde{D}_1 = [- (\mu n)^{-1/d}/\varepsilon,(\mu n)^{-1/d}/\varepsilon]^d$, $\widetilde{D}_2 = [- \varepsilon(\mu n)^{-1/d},\varepsilon(\mu n)^{-1/d}]^d$, and where we define by convention $\delta_1 = 0$.
Observe that the variable $x_1$ disappears from the integrand (indeed, $x_1$ plays the role of the origin). Now, if we substitute $\delta_i^{(j)}$ by $z_i^{(j)} = \delta_i^{(j)} (\mu n)^{1/d}$, denoting $A(\varepsilon) = [- 1/\varepsilon,1/\varepsilon]^d \setminus [- \varepsilon,\varepsilon]^d$, the latter integral becomes
\begin{multline}
    \int_{[w_0,\infty)} w_1^{-\tau} dw_1 \cdots \int_{[w_0,\infty)} w_k^{-\tau} dw_k  \\ \int_{A(\varepsilon)} \left((\mu n)^{-1/d}\right)^d dz_2 \cdots \int_{A(\varepsilon)} \left((\mu n)^{-1/d}\right)^d dz_k \prod_{i < j} \min\left\{ 1, \left(\frac{w_i w_j}{||z_i - z_j||^d} \right)^{\gamma} \right\}, \nonumber
\end{multline}
where we set $z_1 = 0$. Summing up, we can write the mean value of $N(\mathcal{K}_k, W^{\text{G}}(\varepsilon))$ as
\begin{align}
    \E[N(\mathcal{K}_k, W^{\text{G}}(\varepsilon))] &= (1 + o(1)) \frac{n^k}{k!} (\mu n)^{k-1} \int_{w_0}^{\infty} dw_1 \cdots \int_{w_0}^{\infty} dw_k (w_1 \cdots w_k)^{\tau} \nonumber \\
    & \hspace{4.5cm} \int_{A(\varepsilon)} dz_2 \cdots \int_{A(\varepsilon)} dz_k \prod_{i < j} \min\left\{ 1, \left(\frac{w_i w_j}{||z_i - z_j||^d} \right)^{\gamma} \right\} \nonumber \\
    &= (1 + o(1)) \frac{n}{\mu^{1-k}k!} J^{\text{G}}(\varepsilon).
\end{align}
Using a similar reasoning we can compute
\begin{align}
    \E[N(\mathcal{K}_k, \overline{W}^{\text{G}}(\varepsilon))] &= (1 + o(1)) \frac{n^k}{k!} (\mu n)^{k-1} \int_{w_0}^{\infty} dw_1 \cdots \int_{w_0}^{\infty} dw_k (w_1 \cdots w_k)^{\tau} \nonumber \\
    & \hspace{4.5cm} \int_{\overline{A}(\varepsilon)} dz_2 \cdots \int_{\overline{A}(\varepsilon)} dz_k \prod_{i < j} \min\left\{ 1, \left(\frac{w_i w_j}{||z_i - z_j||^d} \right)^{\gamma} \right\} \nonumber \\
    &=: (1 + o(1)) \frac{n}{\mu^{1-k}k!} R^{\text{G}}(\varepsilon).
\end{align}
where $\overline{A}(\varepsilon):= \mathbb{R}^d \setminus A(\varepsilon)$.\\

Observe that the set $A(\varepsilon) \to \mathbb{R}^d \setminus \{0\}$ as $\varepsilon \to 0$. Consequently, 
$$\lim_{\varepsilon\to 0}J^{\text{G}}(\varepsilon) = J^{\text{G}}$$ increasingly, where $J^{\text{G}} < \infty$ from Lemma \ref{lemma:integralconvergence}\ref{lem:Gintegralconvergence} combined with the hypothesis $k < \frac{2}{3-\tau}$. In particular, since $J^{\text{G}} = J^{\text{G}}(\varepsilon) + R^{\text{G}}(\varepsilon)$, this also implies that $$\lim_{\varepsilon\to 0} R^{\text{NG}}(\varepsilon) = 0.$$
{\newchange The proof then is concluded}, following the same argument as in the proof of Theorem \ref{thm:totalcliques}\ref{thm:NGtotalcliques}

{\newchange \subsection{Proof of the maximizer of \eqref{optproblem}}\label{sec:maxsolution}
From the proof of Theorems \ref{thm:totalcliques}(i)-(ii) combined with Lemma~\ref{lemma:selfaveraging} and~\eqref{eq:barepssmallng} it follows that
\begin{equation}
    \E[N(\mathcal{K}_k)] = O(n^{\max\{1,k(3-\tau)/2\}}).
\end{equation}
However, Theorem \ref{thm:lowerbound}(i) shows that 
\begin{equation}
    \E[N(\mathcal{K}_k)] = \Omega(n^{f(\balpha,\bbeta)}).
\end{equation}
As a consequence, $f(\balpha,\bbeta) \leq \max\{1,k(3-\tau)/2\}$, for all $\balpha, \bbeta$. In particular observe that $\balpha^*$ and $\bbeta^*$ defined in \eqref{eq:optvalues} are such that $f(\balpha^*,\bbeta^*)=\max\{1,k(3-\tau)/2\}$. Then, necessarily $(\balpha^*,\beta^*)$ solves the optimization problem in \eqref{optproblem}.
}
\subsection{Proof of Theorem \ref{thm:typicalcliques}}

We rewrite 
\begin{align}
    \P(\bv \in W^{(\star)}(\varepsilon) \;|\; \GIRG|_{\bv} = \mathcal{K}_k) &= \frac{\P(\bv \in W_{\varepsilon}^{(\star)}, \GIRG|_{\bv} = \mathcal{K}_k)}{\P(\GIRG|_{\bv} = \mathcal{K}_k)},
\end{align}
where $(\star)$ stands for NG or G.
From the proof of Theorem \ref{thm:totalcliques}\ref{thm:NGtotalcliques}-\ref{thm:Gtotalcliques}, and after simplifying the common terms, we have
\begin{equation}
    \P(\bv \in W^{\text{NG}}(\varepsilon) \;|\; \GIRG|_{\bv} = \mathcal{K}_k) = \frac{J^{\text{NG}}(\varepsilon)}{J^{\text{NG}}},
\end{equation}
and
\begin{equation}
    \P(\bv \in W^{\text{G}}(\varepsilon) \;|\; \GIRG|_{\bv} = \mathcal{K}_k) = \frac{J^{\text{G}}(\varepsilon)}{J^{\text{G}}}.
\end{equation}
Lemma \ref{lemma:integralconvergence} shows that $J^{\text{NG}}$ and $J^{\text{G}}$ are finite, and that both $J^{\text{NG}}(\varepsilon) \to J^{\text{NG}}$ and $J^{\text{G}}(\varepsilon) \to J^{\text{G}}$ as $\varepsilon \to 0$, increasingly.
\qed
}

\paragraph{Acknowledgements.}
This work is supported by an NWO VENI grant 202.001.
\bibliographystyle{abbrv}

\bibliography{biblio}

\end{document}